\documentclass[12pt]{article}
\usepackage{amsmath}
\usepackage{amsthm,amssymb,amsfonts}
\usepackage{array}
\usepackage{amsmath}
\usepackage{latexsym}
\usepackage{amsfonts}
\usepackage{amssymb}
\usepackage{amscd}
\newtheorem{thm}{Theorem}[section]
\newtheorem{proposition}{Proposition}
\newtheorem{lem}[thm]{Lemma}
\newtheorem{corollary}[thm]{Corollary}
\newtheorem{example}{Example}
\newtheorem{rmk}{Remark}

\newcommand{\N}{\mathbb{N}} 
\newcommand{\R}{\mathbb{R}} 
\newcommand{\C}{\mathbb{C}} 
\newcommand{\K}{\mathbb{K}} 

\usepackage{amssymb}
\usepackage{array}

\begin{document}

\title{Dynamics and spectra of composition operators on the Schwartz space}
\author{Carmen Fern\'andez, Antonio Galbis, Enrique Jord\'a}
\maketitle

\begin{abstract}
In this paper we study the dynamics of the composition operators defined in the Schwartz space $\mathcal{S}(\R)$ of rapidly decreasing functions. We prove that such an operator is never supercyclic and, for monotonic symbols, it is power bounded only in trivial cases. For a polynomial symbol $\varphi$ of degree greater than one we show that the operator is mean ergodic if and only if it is power bounded and this is the case when $\varphi$ has even degree and lacks fixed points. We also discuss the spectrum of composition operators.
\end{abstract}

\section{Introduction and notation}
We study the dynamics of composition operators defined in the Schwartz space $\mathcal{S}(\R)$ of smooth rapidly decreasing functions. The smooth functions $\varphi:\R\to \R$ for which the composition operator $C_\varphi:\mathcal{S}(\R)\to \mathcal{S}(\R), f\mapsto f\circ \varphi,$ is well defined were characterized  by the second and the third author in \cite{GJ18}, where the compactness and closed range of the operator is analyzed. In this paper we discuss the behavior of the orbits $\{C_{\varphi}^n(f):\ n\in\N\}$. Dynamics of composition operators in Banach spaces of analytic functions on the unit disc have been broadly investigated. There are a lot of results relating the dynamics of $C_\varphi$ to that of $\varphi$ \cite{cowen,shapiro}. In the last years composition operators on spaces of smooth functions on the reals have attracted the attention of several authors. Dynamics on the space of real analytic functions is analyzed by Bonet and Doma\'nski in \cite{bd2,bd3}. More recently, Kennesey, Wengenroth
and Przestacki have investigated composition operators on the space $C^{\infty}(\R)$ of smooth functions on $\R$ (see \cite{kw,adam1,adam2,adam2.5}). The dynamics of composition operators on $C^{\infty}(\R)$ has been studied in \cite{adam3}.
\par\medskip
In \cite{golinski} it is proved that $\mathcal{S}(\R)$ admits continuous linear operators for which every nonzero vector is hypercyclic.   In Section 2 we prove that composition operators on the Schwartz class cannot provide these kind of examples, since they are neither hypercyclic nor supercyclic. We recall that an operator $T$ on a locally convex space (lcs) is said to be hypercyclic if there exist a dense orbit $O(T,x):=\{T^n(x):\ n\in\N\}$. The operator is supercyclic if there exists $x\in X$ such that the projective orbit $\K O(T,x)=\{\lambda T^n(x):\ \lambda\in \K, \ n\in\N\}$ is dense. It follows from the definition that only separable spaces support supercyclic operators. There is a vast literature studying hypercyclicity and supercyclicity in concrete operators defined on Banach or Fr\'echet spaces (see the monographies \cite{Bayart,GE_Peris}).
\par\medskip
An operator $T:X\to X$ is said to be power bounded if $\{T^n: n\in \N\}$ is an equicontinuous set. If $X$ is a Fr\'echet space then $T$ is power bounded if and only if $\{T^n(x):\ n\in\N\}$ is bounded for each $x\in X.$ A closely related concept to power boundedness is that of {\em mean ergodicity}. Given $T\in L(X)$, the Ces\`aro means of $T$ are defined
as $T_{[n]}=\sum_{k=1}^{n}T^k/n$. $T$ is said to be mean ergodic when $T_{[n]}$ converges to an operator $P$, which is always a projection,  in the strong operator topology, i.e. if $(T_{[n]}(x))$ is convergent to $P(x)$ for each $x\in X$. Clearly, if $T$ is mean ergodic then $\lim_{n\to \infty}\frac{T^n(x)}{n} = 0$ for each $x\in E.$ The operator is called {\em uniformly mean ergodic} if this convergence happens uniformly on bounded sets, that is $(T_{[n]})$ is convergent to $P$ in $L_b(X)$. When $X$ is a Banach space this means  that the convergence happens in the operator norm topology. If $X$ is reflexive then each power bounded operator is mean ergodic. The result was proved by Lorch \cite{lorch} for reflexive Banach spaces extending the classical Von Neumann mean ergodic theorem valid for unitary operators defined on a Hilbert space. Albanese, Bonet and Ricker \cite{abr} showed that the result remains true if $X$ is a Fr\'echet space. Moreover, if $X$ is {\em Montel}, i.e. barrelled and such that closed
and bounded subsets are compact, it follows that mean ergodicity and uniform mean ergodicity are equivalent concepts. Power boundedness and mean ergodicity are mainly studied as a theoretical tool for analizing the structure of Banach spaces. For example, Fonf, Lin and Wojstaycyck proved in \cite{FLW} that if $X$ is a Banach space which has Schauder basis and it is not reflexive then there exists an operator which is power bounded but not mean ergodic. In the last years,  power boundedness and mean ergodicity have been studied by several authors from a dynamical point of view,  mainly in spaces of analytic functions \cite{bgjj1,bgjj2,bd1,br}. Roughly speaking, power boundedness and mean ergodicity are related to {\em small orbits}, and hyperciclicity and superciclicity to {\em big orbits}. This size classification is of course relative. If $T$ is power bounded then $T$ cannot be hypercyclic, but  $T$ could be supercyclic. Beltr\'an-Meneu provides in \cite{tesiMJ}  an example suggested by Peris of a
hypercyclic operator which is also mean ergodic.
\par We prove that for an increasing symbol $\varphi$ other than the identity the operator $C_\varphi$ is not power bounded on the Schwartz class. For a decreasing symbol $\varphi$ the operator $C_\varphi$ is power bounded if and only if it is mean ergodic and this only happens when $\varphi\circ\varphi$ is the identity. We completely characterize those polynomials $\varphi$ for which $C_\varphi$ is mean ergodic or power bounded. This is the content of Theorem \ref{theo:polynomial_symbol}.
\par
\medskip
Section 4 deals with the spectrum and pointwise spectrum of composition operators. For injective symbols, the pointwise spectrum is completely characterized in Propositions \ref{prop:spectra_increasing} and \ref{prop:spectra_decreasing}. It turns out that, for injective symbols, the pointwise spectrum is empty or reduces to $\{1\}$ (in case the symbol is increasing) or to $\{-1,1\}$ (if the symbol is decreasing). The behavior is completely different for non-injective symbols as Example \ref{ex:eigenvalues_sqrt} shows, where a composition operator is provided whose pointwise spectrum coincides with the open unit disc. We prove that the spectrum of a mean ergodic composition operator is always contained in the closed unit disc (Corollary \ref{cor:spectra_mean-ergodic}). Concrete examples are given were the spectrum coincides with the open unit disc, the unit circle or ${\mathbb C}\setminus \{0\}.$ The spectrum of composition operators in spaces of analytic functions has been recently considered by many authors
(see for instance \cite{cgp,gs,hlns}).
\par\medskip
Let $X({\mathbb R})$ be a locally convex space of functions defined on ${\mathbb R}.$ Then $\varphi:{\mathbb R}\to {\mathbb R}$ is said to be a symbol for $X({\mathbb R})$ if $C_\varphi, f\mapsto f\circ \varphi,$ maps $X({\mathbb R})$ continuously into itself.
\par\medskip
Recall that $\mathcal{S}(\R)$ consists of those smooth functions $f:\R\to\C$ with the property that
$$ \pi_n(f):=\sup_{x\in \R}\sup_{1\leq j\leq n}(1+|x|^2)^n|f^{(j)}(x)|<\infty\ $$ for each $n\in\N.$ $\mathcal{S}(\R)$ is a Fr\'echet Montel space when endowed with the topology generated by the sequence of seminorms $\left(\pi_n\right)_{n\in \N}.$
\par\medskip
We state below the characterization of the symbols for $\mathcal{S}(\R)$.

\begin{thm}[\cite{GJ18}]
\label{symbols}{\rm
A function $\varphi\in C^{\infty}(\R)$ is a symbol for $\mathcal{S}(\R)$ if and only if the following conditions are satisfied:
\begin{itemize}
\item[(i)] For all $j\in \N_0$ there exist $C,p>0$ such that
$$|\varphi^{(j)}(x)|\leq C(1+|\varphi(x)|^2)^p$$

\noindent for every $x\in\R$.

\item[(ii)] There exists $k>0$ such that $|\varphi(x)|\geq |x|^{1/k}$ for all $|x|\geq k.$

\end{itemize}
}
\end{thm}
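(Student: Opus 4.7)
The plan is to prove both implications through Faà di Bruno's formula, with the necessity direction requiring a one-parameter family of Schwartz test functions designed to isolate individual derivatives of $\varphi$.

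\emph{Sufficiency.} Assume (i) and (ii), fix $n\in\N$, and apply Faà di Bruno's formula to write, for $1\leq j\leq n$,
\[
(f\circ\varphi)^{(j)}(x)=\sum_{\alpha_1+2\alpha_2+\cdots+j\alpha_j=j} c_{\alpha}\, f^{(|\alpha|)}(\varphi(x))\prod_{i=1}^{j}\bigl(\varphi^{(i)}(x)\bigr)^{\alpha_i}.
\]
Each product of $\varphi^{(i)}(x)$'s is majorized, using (i), by $C_j(1+\varphi(x)^2)^{Q_j}$ where $Q_j$ depends on $j$ and on the constants from (i). The bound $|f^{(k)}(\varphi(x))|\leq\pi_m(f)(1+\varphi(x)^2)^{-m}$ for $1\leq k\leq m$, together with (ii) in the form $(1+x^2)^n\leq C(1+\varphi(x)^2)^{nk}$ for $|x|\geq k$, absorbs the remaining factor of $(1+x^2)^n$, and choosing $m\geq nk+Q_n$ gives $\pi_n(f\circ\varphi)\leq C_n\pi_m(f)$, hence continuity.

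\emph{Necessity.} By continuity of $C_\varphi$, for each $n$ there exist $m=m(n)$ and $C>0$ with $\pi_n(f\circ\varphi)\leq C\pi_m(f)$ for all $f\in\mathcal{S}(\R)$. Fix $\eta\in C_c^\infty(\R)$ with $\eta\equiv 1$ near $0$ and define the test family $g_y(t):=(t-y)\eta(t-y)$. A Leibniz calculation yields the vanishing pattern $g_y^{(k)}(y)=k\,\eta^{(k-1)}(0)=\delta_{k,1}$, while a translation argument gives $\pi_m(g_y)\leq C(1+y^2)^m$. At a point $x$ with $\varphi(x)=y$, every term in Faà di Bruno's formula for $(g_y\circ\varphi)^{(j)}(x)$ that contains $g_y^{(k)}(y)$ with $k\neq 1$ vanishes, leaving only
\[
(g_y\circ\varphi)^{(j)}(x)=g_y'(\varphi(x))\,\varphi^{(j)}(x)=\varphi^{(j)}(x),
\]
and the continuity estimate yields $(1+x^2)^n|\varphi^{(j)}(x)|\leq C(1+\varphi(x)^2)^m$, which is (i). For (ii), replace $g_y$ by $h_y(t):=h(t-y)$ with a fixed $h\in\mathcal{S}(\R)$ satisfying $h(0)=1$, and use $\sup_x(1+x^2)^n|f(x)|\leq C\pi_{n+1}(f)$ (a standard consequence of integrating $f'$ from infinity, needed because $\pi_n$ as written in the paper only involves $j\geq 1$) to obtain $(1+x^2)^n\leq C(1+\varphi(x)^2)^{m'}$ at any $x$ with $\varphi(x)=y$. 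Extracting roots gives $|\varphi(x)|\geq c|x|^{n/m'}$ for $|x|$ large, so (ii) holds with $k$ proportional to $m'/n$.

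The main obstacle is the Faà di Bruno bookkeeping on both sides: on the sufficiency side one must track how the exponent $Q_j$ accumulates from the $p_i$ in (i) in order to choose $m$ correctly, and on the necessity side one must verify that the design of the family $\{g_y\}$ really kills every ``off-diagonal'' composite term, which is why the vanishing pattern $g_y^{(k)}(y)=\delta_{k,1}$ is critical. A secondary technical point is the paper's seminorm convention, which forces a one-index shift when passing between estimates on derivatives and estimates on $|f|$ itself.
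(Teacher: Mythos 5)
The paper does not actually prove this theorem: it is imported verbatim from \cite{GJ18}, so there is no in-text proof to compare against. Your argument is correct and self-contained, and it follows the natural route for such characterizations: Fa\`a di Bruno together with (i) and (ii) for sufficiency, and for necessity the translated test families $g_y(t)=(t-y)\eta(t-y)$ and $h_y$, where the vanishing pattern $g_y^{(k)}(y)=\delta_{k,1}$ kills all composite terms and isolates $\varphi^{(j)}(x)$, combined with the uniform continuity estimate $\pi_n(f\circ\varphi)\leq C\pi_m(f)$ and the uniform bound $\pi_m(g_y)\leq C(1+y^2)^m$. The only remarks worth making are cosmetic: the case $j=0$ of (i) is trivially true (take $p=1$), and your handling of the index shift forced by the paper's seminorm convention (the supremum over $1\leq j\leq n$ omits the $0$-th derivative) is exactly what is needed to extract the zero-order estimate used for (ii).
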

\par\medskip
It follows that every symbol $\varphi$ for $\mathcal{S}(\R)$ goes to infinity as $|x|$ goes to infinity.
\par\medskip
From now on $\varphi_n = \varphi\circ\ldots\circ\varphi$ denotes the $n$-th iteration of $\varphi.$ In the case that $\varphi$ is a bijection we also write $\varphi_{-n} = \varphi^{-1}\circ\ldots\circ\varphi^{-1}.$

\section{Supercyclicity of the composition operator}

The composition operators on ${\mathcal S}({\mathbb R})$ are never hyperciclic. In fact, for every symbol $\varphi$ and for every $f\in {\mathcal S}({\mathbb R})$ all the functions in the orbit
$$
O(C_\varphi,f) = \left\{f\circ \varphi_n:\ n\in\N\right\}
$$ have the range contained in the bounded set $f({\mathbb R}).$ Therefore no orbit can be dense. The aim of this section is to check that composition operators are not supercyclic. In what follows we will use the following result due to Bayart and Matheron (\cite[Prop I.26]{Bayart}) relating supercyclicity to hypercyclicity: if $T$ is a supercyclic operator on $X$ and the pointwise spectrum $\sigma_p(T^\ast)$ of $T^\ast$ is non empty then $\sigma_p(T^\ast) = \{\lambda\},\ \lambda\neq 0,$ and there is a closed hyperplane $X_0\subset X$ such that $\lambda^{-1}\cdot T_{|X_0}:X_0\to X_0$ is hyperciclic.

A continuous function $\varphi:\R\to\R$ is said to be proper if $\varphi^{-1}([-M,M])$ is bounded in $\R$ for each $M>0$. Let $C_0(\R)$ be the Banach space of continuous functions on $\R$ which vanish at infinity endowed with the $\|\cdot\|_\infty$-topology, and let $\mathcal{D}(\R)$ be the space of compactly supported smooth functions endowed with its natural locally convex topology. It is well known that the inclusions $\mathcal{D}(\R)\hookrightarrow \mathcal{S}(\R)\hookrightarrow C_0(\R)$ are continuous and have dense range. According to condition (ii) in Theorem \ref{symbols} the symbols for $\mathcal{S}(\R)$ are smooth proper functions, which are precisely the symbols for the composition operators on $\mathcal{D}(\R)$. Continuous proper functions form the set of symbols for composition operators on $C_0(\R)$.

\begin{thm}{\rm
	Let $\varphi:\R \to \R$ be a proper continuous function. Then $$C_{\varphi}:C_0(\R)\to C_0(\R)$$ is not supercyclic. If in addition $X$ is a lcs, $X\hookrightarrow C_0(\R)$ is continuously embedded with dense range and $C_{\varphi}(X)\subseteq X$, then $C_{\varphi}:X\to X$ is not supercyclic.
}
\end{thm}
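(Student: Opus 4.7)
The plan is to apply the Bayart--Matheron criterion to eigenvectors of $C_\varphi^*$ and exploit the fact that $C_\varphi$ is a contraction on $C_0(\R)$. The starting observations are that $\|f\circ\varphi\|_\infty\le\|f\|_\infty$, so any restriction of $C_\varphi$ to a closed invariant subspace remains a contraction whose orbits are norm-bounded and hence not dense in an infinite-dimensional Banach space (so such a restriction can never be hypercyclic), and that $C_\varphi^*\delta_x=\delta_{\varphi(x)}$ in $M(\R)=C_0(\R)^*$, so every fixed point of $\varphi$ produces an eigenvector of $C_\varphi^*$ for the eigenvalue $1$.

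If $\varphi$ has a fixed point $p$, then $1\in\sigma_p(C_\varphi^*)$ via $\delta_p$. Assuming $C_\varphi$ supercyclic, Bayart--Matheron forces $\sigma_p(C_\varphi^*)=\{1\}$ and supplies a closed hyperplane $X_0\subset C_0(\R)$ on which $C_\varphi$ is hypercyclic, directly contradicting the contraction observation.

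If $\varphi$ has no fixed point, then $\varphi(x)-x$ has constant sign; WLOG $\varphi(x)>x$ for every $x$. Then $(\varphi_n(x))_n$ is strictly increasing, and no finite limit is possible (it would be a fixed point), so $\varphi_n(x)\to+\infty$ for each $x$ and, by a Dini-type argument, uniformly on every compact $K\subset\R$; hence $\sup_K|f\circ\varphi_n|\to 0$ for every $f\in C_0(\R)$. Supposing $f$ is supercyclic and $\lambda_kC_\varphi^{n_k}f\to g\ne 0$ in sup-norm, I would exploit the two limits $|\lambda_k|\,\|C_\varphi^{n_k}f\|_\infty\to\|g\|_\infty$ and $|\lambda_k|\,\sup_K|C_\varphi^{n_k}f|\to\sup_K|g|$ simultaneously. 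When $\varphi$ is surjective, $\|C_\varphi^nf\|_\infty=\|f\|_\infty$ is constant so $|\lambda_k|$ is bounded, and the decay $\sup_K|C_\varphi^{n_k}f|\to 0$ then forces $\sup_K|g|=0$ for every compact $K$, hence $g\equiv 0$---the desired contradiction. The delicate sub-case is when $\varphi$ is not surjective: then $\varphi_n(\R)=[a_n,\infty)$ with $a_n\nearrow\infty$, so $\|C_\varphi^nf\|_\infty\to 0$ as well; here moreover $\sigma_p(C_\varphi^*)=\emptyset$ (any eigen-measure would be supported on $\bigcap_n\overline{\varphi_n(\R)}=\emptyset$), so Bayart--Matheron is vacuous and one must argue directly, choosing the compact $K$ so that $\varphi_n(K)$ misses where $|f|$ is largest on the tail $[a_n,\infty)$, to again deduce $g\equiv 0$.

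The second assertion is then immediate: a projectively dense orbit in $X$ remains projectively dense in $C_0(\R)$ by density of the embedding, reducing to the first part. The main obstacle is the last sub-case above, where Bayart--Matheron supplies no eigenvalue information and the contradiction has to be extracted from the specific geometry of $\varphi_n(K)$ inside $[a_n,\infty)$ combined with the decay of $f\in C_0(\R)$ at infinity.
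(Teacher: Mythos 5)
Your fixed-point case and your surjective no-fixed-point case are essentially the paper's argument: the former is the Bayart--Matheron reduction to a hypercyclic restriction (killed by the fact that $C_\varphi$ is a contraction), and the latter is exactly the angle criterion of \cite[Theorem 9.1]{Bayart} unpacked by hand, with $\delta_0$ replaced by $\sup_K|\cdot|$. (Two small points there: you need $n_k\to\infty$, which requires choosing the target $g$ outside the countable union of lines $\bigcup_n\C\, C_\varphi^nf$ --- standard by Baire, or simply quote the angle criterion; and the Dini-type uniform divergence on compacts is fine since the open sets $\{x:\varphi_n(x)>M\}$ increase with $n$.)

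The genuine gap is your ``delicate sub-case'' where $\varphi$ is not surjective, which you explicitly leave open. Your proposed direct attack does not close: you would need $\sup_{\varphi_{n_k}(K)}|f|=o\bigl(\sup_{\varphi_{n_k}(\R)}|f|\bigr)$, and nothing prevents $|f|$ restricted to $\varphi_{n_k}(\R)$ from attaining its maximum inside $\varphi_{n_k}(K)$, in which case the two limits you compare carry no information and $\sup_K|g|=0$ does not follow. The missing idea --- and the paper's route --- is that this sub-case should never arise: a continuous injection of $\R$ is strictly monotone, and a strictly monotone proper continuous map is surjective, so ``not surjective'' forces ``not injective''; and if $\varphi(a)=\varphi(b)$ with $a\neq b$, then every function $f\circ\varphi_n$ ($n\geq 1$) lies in the proper closed subspace $\{h\in C_0(\R):h(a)=h(b)\}$, so the projective orbit of any $f$ is contained in the union of this subspace with the line $\C f$ and cannot be dense ($C_\varphi$ is not even cyclic). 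Disposing of non-injective symbols first, as the paper does, leaves only strictly increasing fixed-point-free symbols, which are automatically surjective --- precisely the sub-case you do handle. So the argument is repairable, but as written the proof is incomplete at exactly the point you flag as the main obstacle.
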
	

\begin{proof}
We only need to prove the first statement and we proceed by contradiction. So, let us assume that $C_\varphi:C_0(\R)\to C_0(\R)$ is supercyclic. If $\varphi$ has a fixed point $x_0$ then $\delta_{x_0}$ is a fixed point of $C_{\varphi}^*.$ Here $\delta_{x_0}$ stands for the evaluation functional at $x_0.$ By \cite[Prop I.26]{Bayart} we get a closed hyperplane $X_0\subset C_0(\R)$ such that $C_{\varphi}(X_0)\subset X_0$ and $C_{\varphi}: X_0\to X_0$ is hypercyclic. Let $x\in \R$ such that $X_0\not\subset Ker\delta_x$ and let $f\in X_0$ be a hypercyclic vector. Then $$\{\delta_x C_{\varphi_n}(f)\ n\in\N\:\} = \{f\left(\varphi_n(x)\right):\ n\in\N\}$$ is a bounded set. This is a contradiction with the fact that $f$ is a hypercyclic vector for $C_{\varphi}|_{X_0}$.
\par
If $\varphi$ is not injective then there are two different real numbers $a, b$ such that, for every $f\in C_0(\R),$ the projective orbit $\K O(C_\varphi,f)$ is contained in the proper closed subspace $\{h\in C_0(\R): h(a) = h(b)\}.$ This is also a contradiction.
\par
Consequently, $\varphi$ is strictly monotone and does not have fixed points. This forces $\varphi$ to be increasing and $\varphi(x)-x$ to have constant sign. The last assertion implies that the sequence $\left(|\varphi_n(x)|\right)_n$ diverges to $\infty$ for each $x\in\R$. From this it follows that the sequence $\left(f\left(\varphi_n(x)\right)\right)_n$ converges to $0$ for each $x\in\R$ and $f\in C_0(\R)$. Let $f\in C_0(\R)$ be a supercyclic vector. We can assume without loss of generality that $\|f\|_\infty=1$. Since $\varphi(\R)=\R$ we have that $C_\varphi$ is an isometry in $C_0(\R)$ and we get

 $$\lim_n \frac{|\langle  \delta_0, f\circ\varphi_n\rangle | }{\|f\circ\varphi_n\|_\infty}=\lim_n f\left(\varphi_n(0)\right) = 0,$$ a contradiction with the angle criterion for supercyclicity \cite[Theorem 9.1]{Bayart}, since $\|\delta_0\|=1$. This completes the proof of the first statement while the second one follows from the definitions.
\end{proof}	

\begin{corollary}{\rm
\begin{itemize}

\item[(1)] If $\varphi$ is a symbol for $\mathcal{S}(\R)$ then $C_\varphi: \mathcal{S}(\R)\to \mathcal{S}(\R)$ is not supercyclic.
\item[(2)] If $\varphi$ is smooth and proper then $C_\varphi: \mathcal{D}(\R)\to \mathcal{D}(\R)$ is not supercyclic.
\end{itemize}
}
\end{corollary}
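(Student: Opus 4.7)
The plan is to deduce both statements as direct applications of the preceding theorem, so the work reduces to verifying three hypotheses in each case: that $\varphi$ is continuous and proper, that the ambient space embeds continuously with dense range into $C_0(\R)$, and that $C_\varphi$ maps the space into itself.

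For part (1), I would argue as follows. If $\varphi$ is a symbol for $\mathcal{S}(\R)$, then condition (ii) of Theorem \ref{symbols} gives $|\varphi(x)|\geq |x|^{1/k}$ for $|x|\geq k$. Hence $|\varphi(x)|\to\infty$ as $|x|\to\infty$, which is precisely the statement that $\varphi$ is proper. The inclusion $\mathcal{S}(\R)\hookrightarrow C_0(\R)$ is continuous with dense range, as recalled in the paragraph preceding the theorem (and as is standard). Finally, $C_\varphi(\mathcal{S}(\R))\subseteq \mathcal{S}(\R)$ is exactly the definition of being a symbol for $\mathcal{S}(\R)$. The theorem then applies and yields that $C_\varphi:\mathcal{S}(\R)\to\mathcal{S}(\R)$ is not supercyclic.

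For part (2), I would reason identically. A smooth proper $\varphi$ is in particular continuous and proper, so the hypothesis on $\varphi$ in the theorem is satisfied. As remarked in the text just before the theorem, smooth proper functions are precisely the symbols for composition operators on $\mathcal{D}(\R)$, so $C_\varphi(\mathcal{D}(\R))\subseteq \mathcal{D}(\R)$. The embedding $\mathcal{D}(\R)\hookrightarrow C_0(\R)$ is again continuous and dense. Invoking the theorem in its second form gives the conclusion.

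Since the theorem already does all the heavy lifting, there is really no obstacle here: the proof is a two-line bookkeeping exercise verifying the three hypotheses in each setting. If anything, the only substantive point worth spelling out is the implication (ii)~$\Rightarrow$~properness from Theorem \ref{symbols}, which is immediate from the lower bound on $|\varphi|$.
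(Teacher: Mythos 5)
Your proposal is correct and matches the paper's (implicit) argument exactly: the corollary is stated without further proof precisely because, as you verify, both cases satisfy the three hypotheses of the preceding theorem — properness of $\varphi$ (via condition (ii) of Theorem \ref{symbols} in case (1), by assumption in case (2)), the dense continuous embeddings into $C_0(\R)$, and invariance of the subspace under $C_\varphi$. Nothing further is needed.
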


We remark that in case of non injective symbols we even have that the composition operator is not cyclic, i.e. the image of the composition operator, and therefore the linear span of any orbit is always contained in a proper closed set.

\section{Power bounded and mean ergodic composition operators}

Once we know that composition operators on the Schwartz class cannot have large orbits, we investigate when they are power bounded or mean ergodic. We recall that every power bounded operator on a Fr\'echet-Montel space is uniformly mean ergodic.

\subsection{Some necessary conditions on mean ergodicity}

\begin{lem}\label{lem:Non-meanergodic}{\rm \begin{itemize} \item[(a)] If there exist an unbounded sequence $\left(x_n\right)_n$ such that $\left(\varphi_n(x_n)\right)_n$ is bounded then $C_\varphi$ is not power bounded.
\item[(b)] If $|x_n|^k\geq n$ for some $k\in {\mathbb N}$ and $\left(\varphi_n(x_n)\right)_n$ is bounded then $C_\varphi$ is not mean ergodic.
                  \end{itemize}
 }
\end{lem}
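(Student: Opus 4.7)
My plan is to construct one concrete test function $f\in\mathcal{S}(\R)$ and then read off failure of power boundedness (for (a)) or of mean ergodicity (for (b)) from a single weighted sup-seminorm estimate. For (a) I will use the Fr\'echet-space characterization recalled in Section~1: $C_\varphi$ is power bounded iff every orbit $\{C_\varphi^n f\}_n$ is bounded in $\mathcal{S}(\R)$. For (b) I will use the fact, also noted in Section~1, that mean ergodicity forces $C_\varphi^n f/n\to 0$ in $\mathcal{S}(\R)$ for every~$f$.

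The test function is the same in both cases. Passing to a subsequence in~(a) (automatic in~(b), since $|x_n|^k\ge n$) I may assume $|x_n|\to\infty$; by hypothesis there is $R>0$ with $|\varphi_n(x_n)|\le R$ for all $n$. Choose $f\in\mathcal{D}(\R)\subset\mathcal{S}(\R)$ to be a plateau bump with $f\equiv 1$ on $[-R,R]$, so that $(f\circ\varphi_n)(x_n)=1$ for every $n$. Writing $\rho_m(g):=\sup_x(1+x^2)^m|g(x)|$ for the standard weighted sup-seminorm on the Schwartz space, this immediately gives the key inequality
\[
\rho_m(f\circ\varphi_n)\;\ge\;(1+x_n^2)^m.
\]

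For (a), $m=1$ already yields $\rho_1(f\circ\varphi_n)\ge 1+x_n^2\to\infty$, so $\{f\circ\varphi_n\}$ is unbounded in $\mathcal{S}(\R)$, proving $C_\varphi$ is not power bounded. For (b), assume for contradiction that $C_\varphi$ is mean ergodic. Take $k$ from the hypothesis; for $n$ large enough that $|x_n|\ge 1$ (hence $1+x_n^2\ge x_n^2$) one has
\[
\rho_k\!\left(\frac{f\circ\varphi_n}{n}\right)\;\ge\;\frac{(1+x_n^2)^k}{n}\;\ge\;\frac{|x_n|^{2k}}{n}\;\ge\;\frac{n^2}{n}\;=\;n\;\longrightarrow\;\infty,
\]
contradicting $(f\circ\varphi_n)/n\to 0$ in $\mathcal{S}(\R)$.

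The one technical wrinkle worth flagging is that the paper's seminorms $\pi_n$ only involve the derivatives $f^{(j)}$ with $j\ge 1$, whereas my estimates are on $|g|$ itself; this is where I expect to need a bit of care. It is nonetheless benign: since $(\pi_n)$ generates the Schwartz topology on $\mathcal{S}(\R)$, every $\rho_m$ is dominated by some $\pi_{m'}$. Explicitly, the identity $g(x)=-\int_x^\infty g'(t)\,dt$ together with $\int_x^\infty(1+t^2)^{-m-1}\,dt=O((1+x^2)^{-m-1/2})$ gives $\rho_m\le C_m\pi_{m+1}$, so the two displayed divergences transfer verbatim to divergence of some $\pi_{m'}(f\circ\varphi_n)$ and of $\pi_{m'}(C_\varphi^n f/n)$, respectively, which is what the Fr\'echet-space criteria require. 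The rest of the argument is the one-line estimate above.
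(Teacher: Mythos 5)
Your proof is correct and is essentially the paper's own argument: the same plateau function equal to $1$ on a compact interval containing all the $\varphi_n(x_n)$, evaluated at $x_n$ against a weighted sup-seminorm, gives unboundedness of the orbit in (a) and the failure of $C_{\varphi_n}f/n\to 0$ in (b). The only (harmless) slip is in your justification of $\rho_m\le C_m\pi_{m+1}$: the estimate $\int_x^\infty(1+t^2)^{-m-1}\,dt=O((1+x^2)^{-m-1/2})$ is valid only for $x\ge 0$, and for $x<0$ you should use $g(x)=\int_{-\infty}^x g'(t)\,dt$ instead.
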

\begin{proof}
  Let us assume that $\left|\varphi_n(x_n)\right| \leq M$ for every $n\in {\mathbb N}$ and take $f\in {\mathcal S}({\mathbb R})$ such that $f = 1$ on $[-M, M].$

  (a) If  $(x_n)_n$ is unbounded, then
$$
\sup_{n\in {\mathbb N}}\left(1 + |x_n|\right)\cdot\left|C_{\varphi_n}f\left(x_n\right)\right| = \infty
$$ and $\left(C_{\varphi_n} f\right)_n$ is unbounded.
\par\medskip
(b) The hypothesis implies that $$\left(1 + |x_n|\right)^k\cdot\frac{\left|f\left(\varphi_n(x_n)\right)\right|}{n} = \left(1 + |x_n|\right)^k\cdot\frac{\left|C_{\varphi_n} f(x_n)\right|}{n}$$ does not converge to $0.$ Hence, $\left(\frac{1}{n}C_{\varphi_n}f\right)_n$ does not converge to $0$ and, consequently, $C_\varphi$ is not mean ergodic.
\end{proof}

\begin{corollary}
\label{mayorquea}{\rm
If $\varphi$ is a symbol for $S(\R)$ with $\varphi(\R)=\R$ and there exists $\delta>0$ such that $|\varphi(x)-x|>\delta$ for all $x\in \R$ then $C_\varphi$ is not mean ergodic.
}
\end{corollary}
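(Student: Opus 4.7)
The plan is to apply Lemma \ref{lem:Non-meanergodic}(b) with a suitable sequence $(x_n)$. Specifically, I will produce a sequence $(x_n)$ along which $\varphi_n(x_n) = 0$ while $|x_n|$ grows at least linearly in $n$, which is more than enough for the polynomial growth condition $|x_n|^k \ge n$ required by the lemma.

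First I would observe that since the continuous function $\varphi - \mathrm{id}$ never vanishes on $\R$ (because $|\varphi(x) - x| > \delta$), it must have constant sign by the intermediate value theorem. So either $\varphi(x) \ge x + \delta$ for every $x \in \R$ or $\varphi(x) \le x - \delta$ for every $x \in \R$; the two cases are symmetric and I treat the first. A straightforward induction then yields
\[
\varphi_n(x) \;\ge\; x + n\delta \qquad \text{for all } x \in \R,\ n \in \N,
\]
since $\varphi_{n+1}(x) = \varphi(\varphi_n(x)) \ge \varphi_n(x) + \delta \ge x + (n+1)\delta$.

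Next I would exploit the surjectivity hypothesis: from $\varphi(\R) = \R$ and iteration one gets $\varphi_n(\R) = \R$ for every $n$. In particular, for each $n \in \N$ there exists $x_n \in \R$ with $\varphi_n(x_n) = 0$. Substituting into the inequality from the previous step gives $0 \ge x_n + n\delta$, hence $x_n \le -n\delta$ and therefore $|x_n| \ge n\delta$.

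Finally, I would invoke Lemma \ref{lem:Non-meanergodic}(b). Choosing any fixed $k \in \N$ with $k \ge 2$ (or $k = 1$ if $\delta \ge 1$), the estimate $|x_n|^k \ge (n\delta)^k \ge n$ holds for all sufficiently large $n$, while $\varphi_n(x_n) = 0$ is trivially bounded. The conclusion of the lemma then says that $C_\varphi$ is not mean ergodic. The symmetric case $\varphi(x) \le x - \delta$ is handled identically, producing $x_n \ge n\delta$ instead.

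The only mildly delicate point, and the one requiring a sentence of justification rather than just invocation, is the polynomial growth bookkeeping in the last step: one must check that the linear lower bound $|x_n| \ge n\delta$ produced by the iterated shift estimate is already strong enough to meet the hypothesis of Lemma \ref{lem:Non-meanergodic}(b) for some fixed exponent $k$, which it clearly is.
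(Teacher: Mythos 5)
Your proof is correct and follows essentially the same route as the paper: iterate $\varphi(x)\geq x+\delta$ to get $\varphi_n(x)\geq x+n\delta$, use surjectivity to pick $x_n$ with $\varphi_n(x_n)=0$ so that $|x_n|\geq n\delta$, and invoke Lemma \ref{lem:Non-meanergodic}(b) with a fixed exponent $k$ (the paper uses $k=2$). No gaps.
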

\begin{proof}
We consider only the case $\varphi(x)-x>\delta$. Iterating this condition we get $\varphi_n>\varphi_{n-1}+\delta$, hence $\varphi_n(x)>x+n\delta$ for all $x\in\R$. Therefore, $\varphi_n(x)=0$ implies
$x\leq -n\delta$. From $\varphi_n(\R)=\R$ we get a sequence $\left(x_n\right)_n$ satisfying $\varphi_n(x_n)=0$ and $|x_n|^2>n$ for $n$ large enough. The conclusion follows from Lemma \ref{lem:Non-meanergodic} (b).
\end{proof}

For increasing symbols without fixed points, the hypothesis in the previous result can be relaxed.

\begin{corollary}
\label{asymptotic}{\rm Let $\varphi$ be a strictly increasing symbol without fixed points. If there exists $x_0$ such that $\varphi(x)-x>\delta$ for all $x\leq x_0$ then $C_\varphi$ is not mean ergodic. This is the case if $\varphi(x) > x$ for every $x\in {\mathbb R}$ and $\varphi'(x)\leq 1$ for every $x\leq x_0.$
}
\end{corollary}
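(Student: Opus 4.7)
The plan is to reduce the claim to Lemma~\ref{lem:Non-meanergodic}(b) by producing a sequence $(x_n)$ with $(\varphi_n(x_n))_n$ bounded and $|x_n|^k \geq n$ for some fixed $k$. The natural candidate is an inverse orbit, so first I would confirm that the hypotheses make $\varphi$ a bijection of $\R$: strict monotonicity together with condition~(ii) of Theorem~\ref{symbols} forces $\varphi(x)\to\pm\infty$ as $x\to\pm\infty$, so $\varphi(\R)=\R$ and $\varphi^{-1}$ exists and is smooth. Since $\varphi$ has no fixed point, $\varphi-\mathrm{id}$ has constant sign, and the hypothesis $\varphi(x)-x>\delta$ on $(-\infty,x_0]$ pins this sign to be positive on all of $\R$; equivalently, $\varphi^{-1}(y)<y$ for every $y$.

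Next I would fix a base point, say $a=0$, and set $x_n:=\varphi_{-n}(0)$, so that $\varphi_n(x_n)=0$ is trivially bounded. Because $\varphi^{-1}(y)<y$ everywhere, the sequence $(x_n)$ is strictly decreasing; it cannot accumulate at a finite point (that point would be a fixed point), hence $x_n\to-\infty$. Once $n$ is large enough that $x_n\leq x_0$, the hypothesis applied at $x_n$ gives $x_{n-1}-x_n=\varphi(x_n)-x_n>\delta$, so from some index $N$ onward
\[
x_n<x_N-(n-N)\delta,\qquad \text{hence}\qquad |x_n|\geq (n-N)\delta-|x_N|.
\]
Thus $|x_n|$ grows at least linearly, and in particular $|x_n|^{2}\geq n$ for all sufficiently large $n$. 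After reindexing (replace $x_n$ by $\varphi_{-(n+N')}(0)$ for a large fixed $N'$; then $\varphi_n$ evaluated at this new sequence equals the constant $\varphi_{-N'}(0)$, which is still bounded, while $|x_{n+N'}|^{2}\geq n$ holds for every $n\in\N$), Lemma~\ref{lem:Non-meanergodic}(b) immediately yields that $C_\varphi$ is not mean ergodic.

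For the second assertion, consider $g(x):=\varphi(x)-x$. The assumption $\varphi'(x)\leq 1$ on $(-\infty,x_0]$ gives $g'(x)\leq 0$ there, so $g$ is non-increasing on that half-line and $g(x)\geq g(x_0)$ for every $x\leq x_0$. By hypothesis $g(x_0)=\varphi(x_0)-x_0>0$, so choosing any $\delta\in(0,g(x_0))$ yields $\varphi(x)-x>\delta$ for all $x\leq x_0$, which reduces the situation to the first part of the corollary.

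The only delicate point is the reindexing step: Lemma~\ref{lem:Non-meanergodic}(b) is stated for a sequence satisfying the bound for every $n$, while our construction naturally provides the bound only eventually; the fix above (shifting the inverse-orbit index) keeps $(\varphi_n(x_n))_n$ bounded because forward iterates of a backward orbit collapse to a constant. Everything else is a direct linear-growth estimate from the quantitative repulsion $\varphi(x)-x>\delta$.
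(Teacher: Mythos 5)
Your proof is correct and follows essentially the same route as the paper: pull back along the inverse orbit, show the preimages escape to $-\infty$ at a linear rate forced by the gap $\varphi(x)-x>\delta$, and invoke Lemma~\ref{lem:Non-meanergodic}(b); the second assertion is handled identically via the monotonicity of $\varphi-\mathrm{id}$ on $(-\infty,x_0]$. The only difference is cosmetic: the paper starts the backward orbit at $x_0$ itself (so $\varphi_{-n}(x_0)<x_0$ from the first step and no reindexing is needed), whereas your choice of base point $0$ forces the eventual-index shift you correctly supply.
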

\begin{proof}
 As $\varphi$ has no fixed points, we must have $\varphi(t)>t,$ or equivalently, $\varphi_{-1}(t)<t, $ for every $t\in \R.$ Then, $x_n:=\varphi_{-n}(x_0)<x_0,$ and therefore, $x_n<x_{n-1}-\delta$ for all $n \in \N,$ hence $x_n < x_0 - n \delta,$ and we can argue as in Corollary \ref{mayorquea}. Let us now assume $\varphi'(x)\leq 1$ for every $x\leq x_0$ and $\varphi(x_0)>x_0.$ Then $\varphi(x)-x$ is decreasing in $(-\infty, x_0]$ and hence, taking $\delta:=(\varphi(x_0)-x_0) > 0,$ we have $\varphi(x)\geq x+\delta$ for each $x\leq x_0.$
\end{proof}

A similar result holds if there exists $x_0$ such that  $x-\varphi(x)>\delta$ for every $x\geq x_0.$ In particular, if $x>\varphi(x)$ for every $x\in \R$ and $\varphi'(x)\leq 1$ for all $x\geq x_0$ then $C_\varphi$ is not mean ergodic.

\par\medskip

 From now on, when $C_\varphi$ is mean ergodic we will put
$$Pf = \lim_{n\to \infty}\frac{1}{n}\sum_{k=1}^n C_{\varphi_n}f.
$$

\begin{proposition}
\label{eventuallydivergent}{\rm
Let $\varphi$ be a symbol with a bounded orbit. Assume that there exists $k>0$ such that $\left(\varphi_n(x)\right)_n$ is unbounded for each $|x|\geq k$. Then $C_\varphi$ is not mean ergodic.
}
\end{proposition}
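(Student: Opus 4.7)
I would argue by contradiction: assume $C_\varphi$ is mean ergodic. Since $\mathcal{S}(\R)$ is Fr\'echet--Montel, mean ergodicity is equivalent to uniform mean ergodicity, so $T_{[n]}\to P$ in $L_b(\mathcal{S}(\R))$, where $P$ is a continuous projection with $C_\varphi P=PC_\varphi=P$. Consequently, for every $g\in\mathcal{S}(\R)$ one has $Pg\in\mathcal{S}(\R)$ and $Pg\circ\varphi_n=Pg$ for every $n\ge1$.

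First, set up a test function. Take $M>\sup_n|\varphi_n(x_0)|$ (finite by hypothesis) and pick $f\in\mathcal{S}(\R)$ with $0\le f\le 1$ and $f\equiv 1$ on $[-M,M]$. Then $f(\varphi_n(x_0))=1$ for every $n$, so $T_{[n]}f(x_0)=1$ and $h:=Pf$ satisfies $h(x_0)=1$. Since $h\in\mathcal{S}(\R)$ decays at infinity and $h=h\circ\varphi_n$, whenever the orbit of $x$ is unbounded I may pick $n_j$ with $|\varphi_{n_j}(x)|\to\infty$ and conclude $h(x)=h(\varphi_{n_j}(x))\to 0$. By the hypothesis this yields $h\equiv 0$ on $\{|x|\ge k\}$, and more generally on the whole set of points whose orbit is unbounded.

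The heart of the proof is to contradict the continuity of $h$. Since $h(x_0)=1$, continuity provides an open interval $I\ni x_0$ on which $h\ge 1/2$. Set $W:=\bigcup_{n\ge 0}\varphi_n^{-1}(I)$, which is open; because $h=h\circ\varphi_n$, one has $h\ge 1/2$ on $W$ and, by continuity, also on $\overline W$. Combined with $h\equiv 0$ on $\{|x|\ge k\}$, this forces $\overline W\subseteq(-k,k)$, so $\overline W$ is a proper closed subset of $[-k,k]$ and $\partial W\ne\emptyset$. I would then locate a point $y^*\in\partial W$ that is simultaneously a limit of points in $W$ (so $h(y^*)\ge 1/2$ by continuity) and a limit of points whose orbit is unbounded (so $h(y^*)=0$ by the previous step): the two conclusions together contradict the continuity of $h$, and hence the assumption that $C_\varphi$ is mean ergodic.

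The main obstacle is this last step, namely, exhibiting an \emph{accessible} boundary point $y^*\in\partial W$. In the model case $\varphi(x)=x^3$ with $x_0=0$ one has $W=(-1,1)$ and $\partial W=\{\pm 1\}$, which are indeed accumulated from outside by points with unbounded orbit, so the contradiction is transparent. For a general symbol, this accessibility should follow from a topological/dynamical argument combining the boundedness of $W\subseteq(-k,k)$, the smoothness and properness of $\varphi$, and the standing hypothesis that $|x|\ge k$ forces the orbit of $x$ to be unbounded; these together should rule out the possibility that $\partial W$ is buried inside the interior of the bounded-orbit set.
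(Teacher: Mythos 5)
Your overall strategy is the paper's: use the invariance $Pf\circ\varphi=Pf$ to show that $Pf$ vanishes at every point with unbounded orbit while $Pf=1$ at suitable points with bounded orbit, and then contradict the continuity of $Pf$ at a boundary point between the two regimes. But the step you flag as the ``main obstacle'' is a genuine gap, not a technicality, and your choice of the set $W=\bigcup_{n}\varphi_n^{-1}(I)$ is what creates it. Every point of $W$ has bounded orbit (since $h\geq 1/2>0$ there), so $W$ is contained in the bounded-orbit set $A$, but nothing prevents $\partial W$ from lying entirely in the interior of $A$: points just outside $W$ need not be in any $\varphi_n^{-1}(I)$ and yet may perfectly well have bounded orbits, and your construction gives you no control on $h$ at such points. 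So there is no reason why a boundary point of $W$ should be accumulated by points with unbounded orbit, and the ``topological/dynamical argument'' you appeal to does not exist at this level of generality.

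The paper closes exactly this gap by choosing the boundary point of the \emph{right} set. Let $A=\{x:\ (\varphi_n(x))_n \mbox{ bounded}\}$, which is nonempty and contained in $(-k,k)$ by hypothesis, and put $b=\sup\{|x|:x\in A\}$. This extremal point is automatically accessible from both sides: there is a sequence $x_j\in A$ with $x_j\to b$ (or $-b$), and \emph{every} $x$ with $|x|>b$ has unbounded orbit by the very definition of $b$, so $Pf(x)=0$ on $\{|x|>b\}$ and hence $Pf(b)=0$ by continuity. On the other side, $A$ is $\varphi$-invariant, so if you take $f\equiv 1$ on $[-b,b]\supseteq A$ then the whole orbit of each $x_j$ stays where $f=1$, giving $T_{[n]}f(x_j)=1$ for all $n$ and thus $Pf(x_j)=1$, whence $Pf(b)=1$. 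This is the contradiction. So the fix is to abandon $W$ and work with $A$ and its extremal point; with that substitution your argument becomes the paper's proof.
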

\begin{proof}
Let us assume that $C_\varphi$ is mean ergodic. Then, as $P$ is a continuous operator and $\lim_n \frac{C_{\varphi_{n+1}}(f)}{n}=0,$ we have that $C_\varphi(Pf)=Pf,$ for each $f \in {\mathcal S}({\mathbb R}).$

By hypothesis the set $A:=\{x\in {\mathbb R}: (\varphi_n(x))_n \mbox{ is bounded}\}$ is non-empty and bounded. Take $b:={\rm sup}\{|x|: \, x\in A\},$ and select a sequence $x_k\in A$ converging to $b$ (or to $-b$). Observe that, for  $n, k \in {\mathbb N},$ $\varphi_n(x_k)\in A\subset[-b,b],$ whereas $(\varphi_n(x))_n$ is unbounded when $|x|>b.$ Then, if $f\in {\mathcal S}({\mathbb R})$, $f\equiv 1$ in $[-b,b]$ (observe that we do not assume $b>0$), we have $$Pf(b)=\lim_{k} Pf(x_k)=1.$$
 On the other hand,  $C_{\varphi}(Pf)=Pf$ implies $Pf\left(\varphi_n(x)\right) = Pf(x)$ for all $x\in {\mathbb R}$ and every $n\in {\mathbb N}.$ Then, $Pf(x)=0$ if the orbit $\left(\varphi_n(x)\right)_n$ is unbounded, in particular $Pf(x)=0$ when $|x|>b,$ a contradiction.
\end{proof}

 Now we concentrate on monotone symbols. For $f\in {\mathcal S}({\mathbb R})$ we will use the convention $f(+\infty) = f(-\infty) = 0.$

 \begin{lem}{\rm
Let $\varphi$ be an increasing symbol. Then, for every $x\in {\mathbb R}$ there exists
$$
\varphi^\ast(x) = \lim_{n\to \infty}\varphi_n(x)\in \overline{{\mathbb R}} = {\mathbb R}\cup \left\{\pm\infty\right\}.
$$ If $C_\varphi$ is mean ergodic then
$$
f\circ \varphi^\ast \in {\mathcal S}({\mathbb R}) \ \ \forall\ f\in {\mathcal S}({\mathbb R}).
$$
}
\end{lem}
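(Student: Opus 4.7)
The plan is to handle the two assertions separately, using the monotonicity of $\varphi$ for the first and identifying the mean ergodic projection $Pf$ with $f\circ\varphi^\ast$ for the second.

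For the existence of $\varphi^\ast(x)$, I would exploit the fact that an increasing self-map has monotone orbits. Fix $x\in\R$. If $\varphi(x)\ge x$, applying the order-preserving map $\varphi$ gives $\varphi_2(x)\ge\varphi(x)$ and, by induction, $\varphi_{n+1}(x)\ge\varphi_n(x)$ for every $n$, so $(\varphi_n(x))_n$ is non-decreasing. If instead $\varphi(x)\le x$, the same argument yields a non-increasing orbit. In either case $(\varphi_n(x))_n$ is a monotone real sequence, hence converges in $\overline{\R}$; this defines $\varphi^\ast(x)\in\R\cup\{\pm\infty\}$.

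For the second assertion, the idea is to pin down the Cesàro limit pointwise. Fix $f\in\mathcal{S}(\R)$ and extend $f$ to $\overline{\R}$ via the convention $f(\pm\infty)=0$; this extension is continuous on $\overline{\R}$ precisely because Schwartz functions vanish at infinity. Then, for every $x\in\R$, continuity of the extended $f$ at the point $\varphi^\ast(x)$ (whether finite or infinite) gives
$$\lim_{n\to\infty} f(\varphi_n(x)) = f(\varphi^\ast(x)).$$
By the classical Cesàro lemma, the averages
$$\frac{1}{n}\sum_{k=1}^{n} (C_{\varphi_k}f)(x) \;=\; \frac{1}{n}\sum_{k=1}^{n} f(\varphi_k(x))$$
converge to the same limit $f(\varphi^\ast(x))$.

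Mean ergodicity of $C_\varphi$ says exactly that these Cesàro averages converge to $Pf$ in the topology of $\mathcal{S}(\R)$, and in particular pointwise on $\R$. Comparing the two pointwise limits yields $Pf(x)=f(\varphi^\ast(x))$ for every $x\in\R$, so $f\circ\varphi^\ast = Pf \in \mathcal{S}(\R)$, which is the claim. The only subtle point is the behavior at $x$ with $\varphi^\ast(x)=\pm\infty$; this is precisely where the convention $f(\pm\infty)=0$ enters, and it makes the identification work uniformly over all $x$ regardless of whether the orbit stays bounded. No other obstacle is expected, since the rest amounts to combining monotone convergence with the definition of mean ergodicity.
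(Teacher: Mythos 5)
Your proposal is correct and follows essentially the same route as the paper: monotone orbits of an increasing map give the existence of $\varphi^\ast(x)$ in $\overline{\R}$, and the Ces\`aro averages $\frac{1}{n}\sum_{k=1}^n f(\varphi_k(x))$ converge pointwise both to $Pf(x)$ (by mean ergodicity) and to $f(\varphi^\ast(x))$ (by the Ces\`aro lemma and the convention $f(\pm\infty)=0$), whence $f\circ\varphi^\ast = Pf \in \mathcal{S}(\R)$. Your treatment of the continuity of the extended $f$ at $\pm\infty$ just makes explicit a detail the paper leaves implicit.
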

\begin{proof}

Given $x\in \R,$ as $\varphi $ is increasing, the orbit $\left(\varphi_n(x)\right)_n$  is an increasing (resp. decreasing) sequence if $\varphi(x)\geq x$ (resp. $\varphi(x)<x$) and therefore convergent in $\overline{{\mathbb R}}.$
Let us now assume that $C_\varphi$ is mean ergodic. Then, for every $f\in {\mathcal S}({\mathbb R})$ we have $Pf\in {\mathcal S}({\mathbb R}),$ but
$$
\left(Pf\right)(x) = \lim_{n\to \infty}\frac{1}{n}\sum_{k=1}^n f\left(\varphi_n(x)\right) = f\left(\varphi^\ast(x)\right),
$$ from where the conclusion follows.
\end{proof}

\begin{proposition}
\label{increasing}
{\rm Let $\varphi$ be an increasing symbol with some fixed point. Then either $\varphi(x) = x$ for every $x\in {\mathbb R}$ or $C_\varphi$ is not mean ergodic.
}
\end{proposition}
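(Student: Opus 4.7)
The plan is to assume, for contradiction, that $C_\varphi$ is mean ergodic while $\varphi\neq \mathrm{id}$, and to exploit the previous lemma, which guarantees $f\circ \varphi^\ast\in \mathcal{S}(\mathbb{R})$ for every $f\in \mathcal{S}(\mathbb{R})$. The idea is to locate a point where $\varphi^\ast$ is discontinuous or fails to decay at infinity, and then choose a Schwartz $f$ that turns this failure into a violation of $f\circ \varphi^\ast\in \mathcal{S}(\mathbb{R})$.

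First I would examine the open set $U:=\{x\in \mathbb{R}:\varphi(x)\neq x\}$. It is non-empty because $\varphi\neq \mathrm{id}$, and its complement is non-empty by the fixed-point hypothesis. Pick a connected component $(a,b)\subseteq U$ with $-\infty\leq a<b\leq +\infty$. Continuity forces $\varphi(x)-x$ to keep a constant sign on $(a,b)$; by symmetry we only treat $\varphi(x)>x$ there. Each finite endpoint of $(a,b)$ is a fixed point of $\varphi$, and at least one of $a,b$ must be finite (otherwise $(a,b)=\mathbb{R}$, contradicting the existence of a fixed point). For $x\in (a,b)$, the sequence $\varphi_n(x)$ is increasing and stays in $(a,b]$ (using that $\varphi(b)=b$ whenever $b<\infty$, and monotonicity of $\varphi$); its limit is a fixed point of $\varphi$, hence cannot lie in $(a,b)$, which forces $\varphi^\ast(x)=b$ in all cases (with the convention $b=+\infty$ allowed).

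The contradiction then splits into three cases. (i) If both $a$ and $b$ are finite, then $\varphi^\ast(a)=a$ while $\lim_{x\to a^+}\varphi^\ast(x)=b\neq a$, so any $f\in \mathcal{S}(\mathbb{R})$ with $f(a)\neq f(b)$ makes $f\circ \varphi^\ast$ discontinuous at $a$. (ii) If $a\in \mathbb{R}$ and $b=+\infty$, then $\varphi^\ast(a)=a$ but $\lim_{x\to a^+}\varphi^\ast(x)=+\infty$; picking $f\in \mathcal{S}(\mathbb{R})$ with $f(a)\neq 0$ and using $f(+\infty)=0$ again produces a discontinuity of $f\circ \varphi^\ast$ at $a$. (iii) If $a=-\infty$ and $b\in \mathbb{R}$, then $\varphi^\ast\equiv b$ on $(-\infty,b)$, so for $f$ with $f(b)\neq 0$ the composition $f\circ \varphi^\ast$ equals the nonzero constant $f(b)$ on this unbounded interval, contradicting the decay of Schwartz functions. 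The case $\varphi(x)<x$ on $(a,b)$ is handled identically after swapping the roles of $a$ and $b$ and replacing $\varphi_n$ by its decreasing behavior.

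I do not expect any serious technical obstacle: the argument is essentially a careful organization of boundary cases for the component $(a,b)$. The only subtle point is to confirm that the fixed-point hypothesis is used precisely to exclude $(a,b)=\mathbb{R}$, so that at least one endpoint is finite and can witness the discontinuity or the decay failure of $f\circ \varphi^\ast$.
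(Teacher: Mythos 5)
Your proposal is correct and follows essentially the same route as the paper: both rest on the preceding lemma that mean ergodicity forces $f\circ\varphi^\ast\in\mathcal{S}(\mathbb{R})$, compute $\varphi^\ast$ on a maximal interval where $\varphi(x)\neq x$, and derive a contradiction either from a jump of $f\circ\varphi^\ast$ at a finite fixed endpoint or from failure of decay when the interval is unbounded. Your organization by connected components of $\{\varphi\neq\mathrm{id}\}$ is just a repackaging of the paper's case split on whether the fixed-point set is bounded above, below, or neither; all cases match.
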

\begin{proof}
 Let us denote
 $$
 F = \left\{x\in {\mathbb R}: \ \varphi(x) = x\right\},
 $$ which by hypothesis is not empty. Let us  assume that $F\neq {\mathbb R}.$  We now distinguish three cases.
 \par(a) $F$ is bounded above. Let $x_0$ be the maximum of $F.$ Then, either (a1) $\varphi(x) > x$ for every $x > x_0$ or (a2) $\varphi(x) < x$ for every $x > x_0.$ If (a1) holds then $\varphi^\ast(x) = +\infty$ for every $x > x_0$ and
 $$
 f\left(\varphi^\ast(x)\right) = 0 \ \ \forall x > x_0\ \ \mbox{while}\ f\left(\varphi^\ast(x_0)\right) = f(x_0)
 $$ for every $f\in {\mathcal S}({\mathbb R}).$ In case (a2) holds then
 $$
 f\left(\varphi^\ast(x)\right) = f(x_0)\ \ \forall x \geq x_0, \ \ \forall \ f\in {\mathcal S}({\mathbb R}).
 $$ In both cases, choosing $f\in {\mathcal S}({\mathbb R})$ with $f(x_0)\neq 0$  we see that  $f\circ \varphi^\ast \notin {\mathcal S}({\mathbb R}),$ and $C_\varphi$ is not mean ergodic.
 \par
 (b) $F$ is bounded below. We proceed as in case (a) to get a contradiction.
 \par
 (c) $F$ is neither bounded above nor bounded below. Then there are two fixed points $x_0 < y_0$ such that
 $$
 \left(x_0, y_0\right) \cap F = \emptyset.
 $$ Then $\varphi^\ast(x_0) = x_0$ and $\varphi^\ast(y_0) = y_0.$ On the other hand, either (c1) $\varphi^\ast(x) = x_0$ for every $x \in (x_0, y_0)$ or (c2) $\varphi^\ast(x) = y_0$ for every $x \in (x_0, y_0).$ If (c1) holds then
 $$
  f\left(\varphi^\ast(x)\right) = f(x_0) \ \ \forall x\in [x_0, y_0)\ \ \mbox{while}\ \ f\left(\varphi^\ast(y_0)\right) = f(y_0)
  $$ for every $f\in {\mathcal S}({\mathbb R}).$ Hence, taking $f\in {\mathcal S}(\R)$ with $f(x_0)\neq f(y_0)$, $f\circ \varphi^\ast \notin {\mathcal S}({\mathbb R}),$ and $C_\varphi$ is not mean ergodic. A similar argument works on case (c2).
\end{proof}

\begin{proposition}
\label{decreasing}
{\rm Let $\varphi$ be a decreasing symbol. Then either $\varphi_2(x) = x$ for every $x\in {\mathbb R}$ or $C_\varphi$ is not mean ergodic.
}
\end{proposition}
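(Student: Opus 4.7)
The strategy is to mimic the proof of Proposition \ref{increasing} applied to the increasing symbol $\varphi_2$. Since $\varphi$ is a continuous decreasing bijection of $\R$ (surjectivity follows from condition (ii) of Theorem \ref{symbols}), it has a unique fixed point $x_0$, and $\varphi_2$ is an increasing symbol with $x_0 \in F := \{x \in \R : \varphi_2(x) = x\}$. The preceding lemma applied to $\varphi_2$ gives the existence of $\varphi_2^{\ast}(x) := \lim_n \varphi_{2n}(x) \in \overline{\R}$ for every $x \in \R$; by continuity of $\varphi$, the odd subsequence $\varphi_{2k+1}(x) = \varphi(\varphi_{2k}(x))$ converges to $\varphi(\varphi_2^{\ast}(x))$ when $\varphi_2^{\ast}(x) \in \R$, and to $\mp\infty$ when $\varphi_2^{\ast}(x) = \pm\infty$.

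Now assume $C_\varphi$ is mean ergodic with limit operator $P$. Splitting each Ces\`aro average into even and odd indices, and using the convention $f(\pm\infty) = 0$, I will obtain
$$Pf(x) = \tfrac{1}{2}\bigl(f(\varphi_2^{\ast}(x)) + f(\varphi(\varphi_2^{\ast}(x)))\bigr) \quad \text{when } \varphi_2^{\ast}(x)\in \R,$$
and $Pf(x) = 0$ otherwise. Setting $g := \tfrac{1}{2}(f + f\circ \varphi) \in \mathcal{S}(\R)$, this reads $Pf = g \circ \varphi_2^{\ast}$.

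Arguing by contradiction, suppose $\varphi_2 \neq \mathrm{id}$, and apply to $F$ the case analysis (a), (b), (c) of Proposition \ref{increasing}. In Cases (a) and (b), with extremum $x_1$ of $F$, the formula forces $Pf$ either to vanish on an unbounded ray starting at $x_1$ or to equal the constant $g(x_1)$ on such a ray; in either situation continuity at $x_1$ together with the rapid decay required of $Pf$ forces $g(x_1) = 0$, that is, $f(x_1) + f(\varphi(x_1)) = 0$ for every $f \in \mathcal{S}(\R)$. When $x_1 = x_0$ this reduces to $2f(x_0) = 0$; otherwise $\varphi(x_1) \neq x_1$, and in both subcases a suitable $f$ violates the identity, contradicting $Pf \in \mathcal{S}(\R)$.

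In Case (c), two consecutive fixed points $a < b$ of $\varphi_2$ with $(a,b)\cap F = \emptyset$ impose by continuity the identity $g(a) = g(b)$, i.e.\ $f(a) + f(\varphi(a)) = f(b) + f(\varphi(b))$, for every $f\in \mathcal{S}(\R)$. The main obstacle here is to rule out the pathological configuration in which the multisets $\{a,\varphi(a)\}$ and $\{b,\varphi(b)\}$ coincide, which would trivially force the identity. Such a coincidence requires $\varphi(a) = b$ and $\varphi(b) = a$, making $\varphi$ a decreasing bijection of $[a,b]$ that exchanges the endpoints; the intermediate value theorem then produces a fixed point of $\varphi$ inside $(a,b)$, which by uniqueness must be $x_0$, contradicting $x_0 \in F$ and $(a, b) \cap F = \emptyset$. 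In all remaining configurations (including the borderline cases $a = x_0$ or $b = x_0$) the four-point identity is a nontrivial linear condition on $f$ that can be violated by a suitable $f \in \mathcal{S}(\R)$, yielding the desired contradiction.
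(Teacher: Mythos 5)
Your proof is correct and follows essentially the same route as the paper: identify $Pf=\tfrac12\bigl(f\circ\varphi_2^{\ast}+f\circ\varphi\circ\varphi_2^{\ast}\bigr)$ from the even/odd iterates and derive a contradiction from the structure of the fixed-point set of $\varphi_2$. The only organizational difference is that the paper restricts to $F\cap[\lambda,+\infty)$ (using that fixed points of $\varphi_2$ come in $\varphi$-pairs straddling the fixed point $\lambda$ of $\varphi$), which makes $\varphi(a)\neq b$ automatic, whereas you work with the full set $F$ and rule out the swap configuration $\varphi(a)=b,\ \varphi(b)=a$ by the intermediate value theorem -- a point the paper passes over more quickly.
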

\begin{proof} If $\varphi_2(x)=x$ for every $x\in \R,$ the orbit $O(C_\varphi, f)$ reduces to $\{f, \, f\circ \varphi \}$ for every $f\in {\mathcal S}(\R)$, hence $C_\varphi$ is power bounded.

Let us assume that $C_\varphi $ is mean ergodic and let $\lambda$ be the unique fixed point of $\varphi.$ We note that $\varphi_2$ is increasing and that it may have many fixed points.  We observe that if $a > \lambda$ is a fixed point of $\varphi_2$ then also $\varphi(a)$ is a fixed point of $\varphi_2$ and $\varphi(a) < \lambda.$ Therefore to show that $\varphi_2(x)=x$ for every $x\in \R$ it is enough to prove that
$$
F: = \left\{a\geq \lambda:\ \varphi_2(a) = a\right\}
$$ coincides with $[\lambda, +\infty). $  We proceed by contradiction .
 \par
(a) If $F$ is bounded above,  let $x_0$ be the maximum of $F.$ Then, either (a1) $\varphi_2(x) > x$ for every $x > x_0$ or (a2) $\varphi_2(x) < x$ for every $x > x_0.$ If (a1) holds then
$$
\lim_{n\to \infty}\varphi_{2n}(x) = +\infty,\ \ \lim_{n\to \infty}\varphi_{2n+1}(x) = -\infty,
$$ from where it follows that
$$
\lim_{n\to \infty}f\left(\varphi_n(x)\right) = 0\ \ \forall x > x_0,$$ and thus $Pf|_{(x_0,+\infty)}\equiv 0$ while
$$
Pf(x_0) = \frac{f(x_0) + f\left(\varphi(x_0)\right)}{2}.
$$  for every $f\in {\mathcal S}(\R).$ If instead (a2) holds then
$$
\lim_{n\to \infty}\varphi_{2n}(x) = x_0,\ \ \lim_{n\to \infty}\varphi_{2n+1}(x) = \varphi(x_0),
$$ from where
$$
Pf(x) = \frac{f(x_0) + f\left(\varphi(x_0)\right)}{2}\ \ \forall x > x_0
$$ for every $f\in {\mathcal S}(\R).$ In any case, taking a function $f\in {\mathcal S}(\R)$ with $f(x_0) + f\left(\varphi(x_0)\right) \neq 0$ we run into a contradiction.
\par
(b) Let us assume now that $F$ is not bounded above but $F\neq [\lambda, +\infty).$ Then there are $\lambda \leq a < b$ two consecutive fixed points of $\varphi_2.$  In the case $\varphi_2(x) > x$ for every $x\in (a,b)$ we obtain
$$
\lim_{n\to \infty}\varphi_{2n}(x) = b,\ \ \lim_{n\to \infty}\varphi_{2n+1}(x) = \varphi(b),
$$ so
$$
Pf(x) = \frac{f(b) + f\left(\varphi(b)\right)}{2} \ \ \ \forall x\in (a,b),
$$ while
$$
Pf(a) = \frac{f(a) + f\left(\varphi(a)\right)}{2}.
$$ This is a contradiction since $\varphi(a)\neq b.$ In the case $\varphi_2(x) > x$ a similar argument gives a contradiction. Consequently $\varphi_2(x) = x$ for every $x\geq \lambda.$
\end{proof}

We illustrate below the fact that, besides the trivial case $\varphi(x)=-x$, there are many more decreasing symbols satisfying $\varphi_2(x)=x$.

\begin{example}{\rm Let $f\in C^\infty({\mathbb R})$ be an even function with $\left|f'(x)\right| \leq a < 1$ for every $x\in {\mathbb R}$ and whose derivatives have polynomial growth. Then the equation
$$
x+y = f(x-y)
$$ defines a decreasing symbol $y = \varphi(x)$ such that $\varphi\left(\varphi(x)\right) = x.$
}
\end{example}
\begin{proof}
We first observe that $g(x) = f(x) - x$ is a strictly decreasing function. Moreover, since $\left|f(x)\right| \leq \left|f(0)\right| + a\left|x\right|,$
$$
\lim_{x\to +\infty}g(x) = -\infty,\ \ \ \lim_{x\to -\infty}g(x) = +\infty.
$$ We claim that for every $x\in {\mathbb R}$ there is a unique solution $y = \varphi(x)$ of the equation $x+y = f(x-y).$ In fact, it is enough to take $y = u+x$ where $u$ is the unique solution of $f(u) - u = 2x.$ An application of the implicit function theorem gives that $\varphi\in C^\infty({\mathbb R}).$
\par\medskip
We now check that $\varphi\left(\varphi(x)\right) = x.$ To this end we denote $\overline{x} = \varphi(x)$ and observe that
$$
\overline{x}+x = f\left(x-\overline{x}\right) = f\left(\overline{x}-x\right),
$$ which implies $\varphi\left(\overline{x}\right) = x.$ From
\begin{equation}\label{eq:implicit}
x+\varphi(x) = f\left(\varphi(x)-x\right)
\end{equation} we get
$$
1 + \varphi'(x) = f'\left(\varphi(x)-x\right)\cdot \left(\varphi'(x) - 1\right)
$$ and
$$
\frac{-2}{1-a} < \varphi'(x) = \frac{-1-f'\left(\varphi(x)-x\right)}{1-f'\left(\varphi(x)-x\right)} < 0.
$$ In particular $\varphi'$ is bounded.
\par\medskip
To finish, we show that $\varphi$ is a symbol for ${\mathcal S}({\mathbb R}).$
\par\medskip
(i) We have
$$
\begin{array}{ll}
\left|x\right| - \left|\varphi(x)\right| & \leq \left|x+\varphi(x)\right| = \left|f\left(\varphi(x)-x\right)\right| \\ &\\  & \leq \left|f(0)\right| + a\left|\varphi(x)-x\right| \\ & \\ & \leq \left|f(0)\right| + a\left|\varphi(x)\right| + a\left|x\right|,
\end{array}
$$ from where it follows that there is $\varepsilon > 0$ such that
$$
\left|\varphi(x)\right| \geq \varepsilon \left|x\right| \ \ \forall \left|x\right| \geq \varepsilon^{-1}.
$$
\par\medskip
(ii) Fix $n\geq 2$ and apply Fa\`a di Bruno formula to $(\ref{eq:implicit}).$ Then we obtain
$$
\varphi^{(n)}(x) = \sum \frac{n!}{k_1!\ldots
k_n!}f^{(k)}\left(\varphi(x)-x\right)\left(\frac{\varphi'(x)-1}{1!}\right)^{k_1}\ldots
\left(\frac{\varphi^{(n)}(x)}{n!}\right)^{k_n}$$
\noindent
where the sum is extended over all $(k_1,\ldots,k_n)\in{\mathbb N}_0^n$ such that
$k_1 + 2k_2 + \ldots + nk_n = n$ and $k:=k_1 + \ldots + k_n.$ Then
$$
\varphi^{(n)}(x) = f'\left(\varphi(x)-x\right)\cdot \varphi^{(n)}(x) + g_n(x),
$$ where $g_n(x)$ is a sum involving products of terms of the form $f^{(k)}\left(\varphi(x)-x\right)$ and $\varphi^{(m)}(x)$ and $m < n.$ This permits to use an induction argument to conclude that, for every $n\in {\mathbb N}$ there are $C_n > 0$ and $p_n\in {\mathbb N}$ such that
$$
\left|\varphi^{(n)}(x)\right| \leq C_n \left(1+\left|\varphi'(x)\right|\right)^{p_n}.
$$ Since $\varphi'$ is bounded we conclude.
\end{proof}

We see below that for monotonic symbols power boundedness is equivalent to periodicity of the composition operator.

\begin{thm}\label{monotonic}{\rm
Let $\varphi$ be a monotonic symbol
\begin{itemize}
\item[(a)]  If $\varphi$ is increasing then $C_\varphi$ is power bounded if and only if $\varphi(x)=x$ for each $x\in\R$.
\item[(b)] If $\varphi$ is decreasing then $C_\varphi$ is power bounded if and only if $C_\varphi$ is mean ergodic if and only if $\varphi_2(x)=x$ for each $x\in\R$.
\end{itemize}
}
\end{thm}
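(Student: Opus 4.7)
My plan is to reduce both parts to results already established in Section 3, using crucially that $\mathcal{S}(\R)$ is Fr\'echet-Montel, so every power bounded operator on it is (uniformly) mean ergodic. The ``if'' directions are immediate: if $\varphi=\mathrm{id}$ then $C_\varphi$ is the identity; and if $\varphi_2=\mathrm{id}$ then for every $f\in\mathcal{S}(\R)$ the orbit $\{C_\varphi^n f:n\in\N\}$ equals $\{f,f\circ\varphi\}$, so $\{C_\varphi^n:n\in\N\}$ is a finite, hence equicontinuous, family. In this second case the Ces\`aro means $T_{[n]}$ differ from $\tfrac12(I+C_\varphi)$ by an $O(1/n)$ term in $L_b(\mathcal{S}(\R))$, giving mean ergodicity as well.

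For the non-trivial direction of (b), assume $C_\varphi$ is power bounded; then it is mean ergodic by the Fr\'echet-Montel observation, and Proposition \ref{decreasing} forces $\varphi_2=\mathrm{id}$. This closes the chain of three equivalences.

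For the non-trivial direction of (a), suppose $\varphi$ is increasing and $\varphi\neq\mathrm{id}$, and I aim to rule out power boundedness. If $\varphi$ has a fixed point, Proposition \ref{increasing} already tells us $C_\varphi$ is not mean ergodic, hence not power bounded. If $\varphi$ has no fixed point, then $\varphi(x)-x$ has constant sign on $\R$, and Theorem \ref{symbols}(ii) together with monotonicity gives $\varphi(x)\to\pm\infty$ as $x\to\pm\infty$; thus $\varphi:\R\to\R$ is a bijection and $\varphi^{-1}$ is an increasing map which also has no fixed points. Setting $x_n:=\varphi_{-n}(0)$ produces a strictly monotone sequence that cannot have a finite cluster point (such a point would be fixed by $\varphi^{-1}$), so $|x_n|\to\infty$; since $\varphi_n(x_n)=0$ is bounded, Lemma \ref{lem:Non-meanergodic}(a) yields that $C_\varphi$ is not power bounded.

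The substantial work has already been done in Propositions \ref{increasing}, \ref{decreasing} and Lemma \ref{lem:Non-meanergodic}, so the only remaining obstacle is bookkeeping: the case split in (a), and the short verification that an increasing symbol without fixed points is a bijection of $\R$ whose inverse is again fixed-point-free, which is where condition (ii) of Theorem \ref{symbols} enters.
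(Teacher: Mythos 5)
Your proposal is correct and follows essentially the same route as the paper: part (b) is reduced to Proposition \ref{decreasing} (plus the Fr\'echet--Montel fact that power bounded implies mean ergodic), and part (a) splits into the fixed-point case handled by Proposition \ref{increasing} and the fixed-point-free case handled by producing an unbounded sequence $(x_n)$ with $\varphi_n(x_n)=0$ and invoking Lemma \ref{lem:Non-meanergodic}(a). The only cosmetic difference is how you show $|x_n|\to\infty$ (a finite cluster point would be a fixed point of $\varphi^{-1}$, versus the paper's observation that $\varphi_n(a)\to+\infty$ forces $x_n\leq a$ eventually); both are fine.
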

\begin{proof}
We get (b) as a direct consequence of Proposition \ref{decreasing}.

To prove (a) we observe that by Proposition \ref{increasing}, if $\varphi$ is an increasing symbol which is not the identity and has at least one fixed point, then $C_\varphi$ is not mean ergodic, and consequently $C_\varphi$ is not power bounded. If $\varphi$ does not have a fixed point we can assume, without loss of generality, $\varphi(x) > x$ for every $x\in {\mathbb R}.$ Since $\varphi({\mathbb R}) = {\mathbb R},$ for every $n\in {\mathbb N}$ there exists $x_n\in {\mathbb R}$ such that $\varphi_n(x_n) = 0.$ From $\varphi_{n+1}(x_{n}) > \varphi(0) > 0$ and $\varphi_{n+1}(x_{n+1}) = 0$ we get $x_{n+1} < x_n.$ Moreover, $\left(x_n\right)_n$ is unbounded. In fact, for every $a\in {\mathbb R}$ we have $\lim_{n\to\infty}\varphi_n(a) = +\infty,$ hence $x_n\leq a$ for $n$ large enough. We conclude that the operator $C_\varphi$ is not power bounded as a consequence of Lemma \ref{lem:Non-meanergodic}.
\end{proof}
\begin{rmk}{\rm
As a consequence of Corollary \ref{asymptotic}, the unique case of monotonic symbols for which we cannot assure that the corresponding composition operators are mean ergodic or not is that of increasing symbols without fixed points that approach asymptotically to $y=x$ as $x$ goes to $-\infty$ (in the case $\varphi(x)>x$) or to $+\infty$ (if $\varphi(x)<x$). We do not know if $C_{\varphi}$ is mean ergodic for $\varphi(x)=x+e^{-x^2}$. On the other hand, for this $\varphi,$ $C_\varphi$ is mean ergodic on $C_0({\mathbb R}).$ In fact, it is power bounded on $C_0({\mathbb R})$ and it is easy to see that for every continuous and compactly supported $f$ one has
$$
\lim_{N\to \infty}\frac{1}{N}\sum_{n=1}^N C_{\varphi_n}f = 0.
$$
}
\end{rmk}

\subsection{Characterization of power boundedness}

Our next result shows that $C_\varphi$ is power bounded if the sequence of symbols $\{\varphi_n:\ n\in {\mathbb N}\}$ satisfies uniformly the conditions in Theorem \ref{symbols}.

\begin{proposition}\label{prop:powerbounded}{\rm For a symbol $\varphi$ the composition operator $C_\varphi$ is power bounded if and only if the following statements hold

\begin{itemize}
\item[(i)] For all $j\in {\mathbb N}_0$ there exist $C,p>0$ such that
$$\left|(\varphi_n)^{(j)}(x)\right|\leq C(1+\varphi_n(x)^2)^p$$ for every $x\in{\mathbb R}$ and  every $n\in {\mathbb N}.$
\item[(ii)] There exists $k>0$ such that $|\varphi_n(x)|\geq |x|^{1/k}$ for all $|x|\geq k$ and  every $n\in {\mathbb N}.$
\end{itemize}
}
\end{proposition}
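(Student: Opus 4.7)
The plan is to characterise power boundedness as equicontinuity of the family $\{C_{\varphi_k}:k\in\N\}$ in $L(\mathcal{S}(\R))$. Since $\mathcal{S}(\R)$ is Fr\'echet, equicontinuity of this family is equivalent to the existence, for each seminorm $\pi_n$, of some $m=m(n)$ and $C=C(n)>0$ with
\begin{equation*}
\pi_n(f\circ\varphi_k)\le C\,\pi_m(f)\qquad\forall\,f\in\mathcal{S}(\R),\ \forall\,k\in\N.
\end{equation*}
Conditions (i) and (ii) are exactly the uniform-in-$k$ versions of the conditions in Theorem \ref{symbols} that make each $\varphi_k$ a symbol for $\mathcal{S}(\R)$, so the argument runs in parallel with that theorem with every constant controlled independently of $k$.

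For the sufficient direction I would apply Fa\`a di Bruno's formula to $(f\circ\varphi_k)^{(j)}(x)$, writing it as a finite sum of products of a single factor $f^{(r)}(\varphi_k(x))$ with $1\le r\le j$ and factors of the form $\varphi_k^{(i)}(x)$. Condition (i) bounds each $\varphi_k^{(i)}(x)$ by a fixed power of $(1+\varphi_k(x)^2)$ independent of $k$, so the whole expression is controlled by $C(1+\varphi_k(x)^2)^{P(j)}|f^{(r)}(\varphi_k(x))|$ for some $P(j)$ not depending on $k$. Condition (ii) then permits the substitution $(1+|x|^2)^n\le C(1+\varphi_k(x)^2)^{nk}$ for $|x|\ge k$, while the region $|x|<k$ is trivially bounded. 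Absorbing the total power of $(1+\varphi_k(x)^2)$ into the Schwartz decay of $f$ yields $\pi_n(f\circ\varphi_k)\le C'\pi_m(f)$ with $m=nk+P(j)$, uniformly in $k$, which is the desired equicontinuity.

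For the necessary direction I would argue by contradiction. Assume $C_\varphi$ is power bounded but (ii) fails; then for every $k$ there are $n_k\in\N$ and $x_k\in\R$ with $|x_k|\ge k$ and $|\varphi_{n_k}(x_k)|<|x_k|^{1/k}$. After extracting a subsequence, either $(\varphi_{n_k}(x_k))_k$ is bounded, in which case Lemma \ref{lem:Non-meanergodic}(a) immediately gives that $C_\varphi$ is not power bounded, or $|\varphi_{n_k}(x_k)|\to\infty$. In the latter case I would choose $f\in\mathcal{S}(\R)$ not vanishing at the points $\varphi_{n_k}(x_k)$ and exploit the inequality $|x_k|\ge|\varphi_{n_k}(x_k)|^k$ to force $(1+|x_k|^2)^N\,|(f\circ\varphi_{n_k})^{(j)}(x_k)|$ to diverge, contradicting the equicontinuity bound. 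The failure of (i) for some index $j$ is treated in parallel: Fa\`a di Bruno applied to $(f\circ\varphi_{n_k})^{(j)}$ isolates a term containing $(\varphi_{n_k})^{(j)}(x_k)$ whose unbounded growth cannot be absorbed by any Schwartz seminorm of a suitably chosen $f$.

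The main obstacle lies in the necessary direction, specifically the subcase in which the sequence witnessing the failure of (ii) satisfies $|\varphi_{n_k}(x_k)|\to\infty$: a naive Gaussian test function decays too rapidly to beat the polynomial gap between $|x_k|$ and $|\varphi_{n_k}(x_k)|^k$. The remedy is either to refine the extraction so that $|\varphi_{n_k}(x_k)|$ grows slowly enough for a tailored Schwartz $f$ to produce genuine blow-up, or to perform a diagonal procedure that replaces $(n_k,x_k)$ by a sequence with $(\varphi_{n_k}(x_k))_k$ bounded and then invoke Lemma \ref{lem:Non-meanergodic}(a) directly.
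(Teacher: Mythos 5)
Your overall architecture (equicontinuity of $\{C_{\varphi_n}\}$, Fa\`a di Bruno with uniform constants for sufficiency, contradiction plus Lemma \ref{lem:Non-meanergodic}(a) for necessity) matches the paper's, but the necessary direction has a genuine gap at exactly the point you flag as ``the main obstacle,'' and neither of your proposed remedies closes it. The diagonal procedure cannot work: under power boundedness, Lemma \ref{lem:Non-meanergodic}(a) forces \emph{every} witnessing sequence for the failure of (ii) to have $\left(\varphi_{n_j}(x_j)\right)_j$ unbounded, so there is nothing to diagonalize towards. The missing idea is the construction of the test function. One first extracts, inductively and using that each individual $\varphi_n$ is a symbol, strictly increasing sequences $(k_j)_j$ and $(n_j)_j$ and points $x_j$ with $|x_j|>k_j$, $|\varphi_{n_j}(x_j)|^{k_j}<|x_j|$, and the points $t_j:=\varphi_{n_j}(x_j)$ separated by at least $1$; then one sets
$$
f(x)=\sum_{j=1}^\infty \frac{1}{\left(1+t_j^2\right)^{k_j}}\,\rho\left(x-t_j\right)
$$
for a fixed bump $\rho$ with $\rho(0)=1$. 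Because $k_j\to\infty$, the coefficients decay faster than any fixed polynomial in $t_j$, so $f\in{\mathcal S}({\mathbb R})$; because $|x_j|>|t_j|^{k_j}$ with $|t_j|\to\infty$, a fixed power of $(1+|x_j|)$ eventually dominates $\left(1+t_j^2\right)^{k_j}$, so $(1+|x_j|)^3\left|f\left(\varphi_{n_j}(x_j)\right)\right|$ is unbounded, contradicting power boundedness. The point your sketch misses is quantitative: $f(t_j)$ must decay superpolynomially in $t_j$ (to be Schwartz) yet subpolynomially in $x_j$ (to blow up), and this is possible only because the exponents $k_j$ in the witnessing inequality tend to infinity --- not because $|t_j|$ can be made to grow slowly, which is the wrong knob to turn.

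The necessity of (i) is also underdeveloped. ``Isolating the term containing $(\varphi_{\ell_j})^{(n)}(x_j)$'' requires two ingredients you do not mention: an induction on the order $n$ (one takes $n$ minimal, so that (i) already holds uniformly for orders $<n$; this is what controls the remaining Fa\`a di Bruno terms), and a case split according to whether $\left(\varphi_{\ell_j}(x_j)\right)_j$ is bounded. In the unbounded case one again needs an infinite sum of bumps, built from a $\rho$ with $\rho'(0)=1$ and $\rho^{(i)}(0)=0$ for $2\le i\le n$ so that only $f'\left(\varphi_{\ell_j}(x_j)\right)\cdot(\varphi_{\ell_j})^{(n)}(x_j)$ survives; in the bounded case a fixed $f$ with $f'(y_0)=1$ at a limit point $y_0$ works, the lower-order contributions being bounded by the induction hypothesis. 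Your sufficiency sketch is essentially the paper's, which simply defers to the proof of Theorem 2.3 of \cite{GJ18} with constants uniform in $n$.
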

\begin{proof}
Proceeding as in the proof of \cite[Theorem 2.3]{GJ18} we obtain the sufficiency of conditions (i) and (ii).

Assume now that $C_\varphi$ is power bounded. If (ii) fails, for each $k$ we find $\ell_k\in {\mathbb N}$ and $y_k\in {\mathbb R}$ with $|y_k|>k$ such that
$$
|\varphi_{\ell_k}(y_k)|^k<|y_k|.
$$ As every $\varphi_n$ is a symbol, we may proceed inductively using Theorem \ref{symbols}(ii) and the fact above to find strictly increasing sequences $(k_j)_j$ and $(n_j)_j$ in ${\mathbb N}$ and a sequence $(x_j)_j$ in ${\mathbb R}$ such that $|x_{j}|>k_{j}$ for which $$|\varphi_{n_j}(x_j)|^{k_j}<|x_j|.$$
By Lemma \ref{lem:Non-meanergodic} (a), the sequence $(\varphi_{n_j}(x_j))_j$ is unbounded, hence, passing to a subsequence, we may assume that
$$|\varphi_{n_{j+1}}(x_{j+1})|>1+|\varphi_{n_j}(x_j)|.
$$ Taking  $\rho \in {\mathcal D}\left(-1/2, 1/2\right)$ with $\rho(0)=1,$ the function
$$
f(x):=\sum_{j=1}^\infty \frac{1}{(1+|\varphi_{n_j}(x_j)|^2)^{k_j}}\rho(x-\varphi_{n_j}(x_j)),$$ belongs to ${\mathcal S}({\mathbb R}),$ and $$(1+|x_j|)^3|C^{n_j}_\varphi f(x_j)|\gtrsim 1+|x_j|,$$ a contradiction.

To prove that (i) holds, we proceed again by contradiction. Therefore, we assume that there is $n\in {\mathbb N}$ such that for every $j$ we find $\ell_j$ and $x_j$ with $$|(\varphi_{\ell_j})^{(n)}(x_j)|>j(1+|\varphi_{\ell_j}(x_j)|^2)^j.$$ We can assume that either $n=1$ or there are $p$ and $C$ with
\begin{equation}\label{eq:power}
|(\varphi_\ell)^{(i)}(x)|\leq C(1+|\varphi_\ell(x)|^2)^{p},
\end{equation} for all $x\in {\mathbb R},$ all $\ell \in {\mathbb N}$ and $1\leq i <n.$ Now, if the sequence $(\varphi_{\ell_j} (x_j))_j$ is unbounded we can assume, passing to a subsequence if necessary, that
$$
|\varphi_{\ell_{j+1}}(x_{j+1})|>1+|\varphi_{\ell_j}(x_j)|.
$$ We take $\rho \in {\mathcal D}\left(-1/2, 1/2\right)$ with $\rho'(0)=1$ and $\rho^{(j)}(0)=0$ for $2\leq j \leq n,$ and define
$$
f(x):=\sum_{j=1}^\infty \frac{1}{(1+|\varphi_{\ell_j}(x_j)|^2)^{j}}\rho(x-\varphi_{\ell_j}(x_j)).$$
Then, $f\in {\mathcal S}({\mathbb R}),$ and by Fa\`a di Bruno formula,
$$\left|(C_{\varphi_{\ell_j}}(f))^{(n)}(x_j)\right|=\left|\frac{\rho'(0)(\varphi_{\ell_j})^{(n)}(x_j)|}{(1+(\varphi_{\ell_j}(x_j))^2)^j}\right|>j,$$ a contradiction. Hence, the sequence $(\varphi_{\ell_j} (x_j))_j$  is bounded and, passing to a subsequence, we may assume that it converges to some $y_0\in {\mathbb R}.$ If we take, $f\in {\mathcal S}({\mathbb R}),$ with $f'(y_0)=1,$ then for $j$ big enough we have $f'(\varphi_{\ell_j}(x_j))>1/2.$ By Fa\`a de Bruno formula and (\ref{eq:power}),
$$
\left(C_{\varphi_{\ell_j}}f\right)^{(n)}(x_j)= (\varphi_{\ell_j})^{(n)}(x_j)f'(\varphi_{\ell_j}(x_j))+A_j,$$
where $(A_j)_j$ is a bounded sequence, hence $\left((C_{\varphi_{\ell_j}}f)^{(n)}(x_j)\right)_j$ is unbounded, a contradiction.

\end{proof}

\begin{example}\label{ex:powerbounded_sqrt}{\rm Let $\varphi(x) = \sqrt{x^2 + 1}.$ Then $C_\varphi$ is power bounded.
}
\end{example}
\begin{proof}
 The iterates of the symbol are given by
$\varphi_n(x) = \sqrt{x^2 + n},$ hence $\varphi_n(x)>|x|$ for every $x\in {\mathbb R}$ and condition (ii) is fulfilled. To check condition (i) we see that for every $j\in {\mathbb N}$ there is $C>0$ such that for each $n\in {\mathbb N}$ $$\left|(\varphi_n)^{(j)}(x)\right|\leq C \varphi_n(x).$$

In fact, it is easy to see that $|(\varphi_n)^{(k)}(x)|\leq 1$ for $k=1,\, 2,$ and every $n.$ For $k\geq 3,$ by Fa\`a de Bruno formula, for every $x\in {\mathbb R}$,
$$\left|(\varphi_n)^{(k)}(x)\right|=\sum \frac{k!}{j_1! j_2!}C_{j_1+j_2}(x^2+n)^{-(j_1+j_2)+\frac{1}{2}}|x|^{j_1}\leq D_k \varphi_n(x),$$ where the sum is extended over all $(j_1,j_2)\in {\mathbb N}_0^2$ such that $j_1 + 2j_2 = k.$
\end{proof}

Our next aim is to characterize those polynomials $\varphi$ with the property that $C_\varphi$ is mean ergodic.

\begin{rmk}{\rm Let $\varphi(x) = ax + b$ be given. Then $C_\varphi$ is mean ergodic if and only if $\varphi(x) = x$ or $\varphi(x) = -x + b.$ In fact, let us assume that $a > 0$ and $C_\varphi$ is mean ergodic. From Proposition \ref{increasing} we have that $a = 1.$ Now, from $\varphi_n(-nb) = 0$ and Lemma \ref{lem:Non-meanergodic}(b) we conclude $b = 0.$ In the case that $a < 0$ we apply Proposition \ref{decreasing}.
 }
\end{rmk}

\par\medskip
The next lemma provides information on the behavior of the iterates of polynomials with even degree and without fixed points.

\begin{lem}\label{lem:iterates-symbol}{\rm Let $\varphi$ be a polynomial of even degree without fixed points. Then there is $N\in {\mathbb N}$ such that
$\psi = \varphi_{N}$ has neither zeros nor fixed points. Moreover, for every $K > 0$ there is $m_0\in {\mathbb N}$ such that
$$
\left|\psi_{m+1}(t)\right| \geq K \left(\psi_{m}(t)\right)^2\ \ \forall m\geq m_0,\ \ \ \forall t\in {\mathbb R}.
$$
}
\end{lem}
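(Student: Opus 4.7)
The plan is to observe first that $\varphi(x)-x$ is a polynomial of even degree with no real roots, so it has constant sign and is bounded away from zero. I will treat the case $\varphi(x)>x$ for every $x\in\R$ in detail; the opposite case is completely symmetric, because then $\varphi$ necessarily has negative leading coefficient, every iterate inherits this (by a direct leading-term computation: $(-c)^{d+1}$ with $d$ even remains negative), and the global maximum plays the role of the minimum. Set $\delta=\min_x(\varphi(x)-x)>0$, which exists and is positive because $\varphi(x)-x$ has even degree with positive leading coefficient. Iterating $\varphi(y)\geq y+\delta$ gives $\varphi_n(x)\geq \varphi_{n-1}(x)+\delta\geq\ldots\geq x+n\delta$ for every $x$. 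Since $\varphi$ has positive leading coefficient and even degree $d\geq 2$, each $\varphi_n$ is again an even-degree polynomial with positive leading coefficient and hence attains a global minimum $m_n$; taking the infimum of both sides in $\varphi_n\geq \varphi_{n-1}+\delta$ yields $m_n\geq m_{n-1}+\delta$, so $m_n\to +\infty$.

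Next I would choose $N\in\N$ large enough so that simultaneously $m_N>0$ and $d^N\geq 3$, and set $\psi=\varphi_N$. The first condition forces $\psi$ to be strictly positive on $\R$, hence $\psi$ has no zeros, while $\psi(t)\geq t+N\delta>t$ rules out fixed points. Both requirements can be met at once: $m_n\to +\infty$, and the degree condition $d^N\geq 3$ fails only in the isolated case $d=2$, $N=1$, which is easily avoided by enlarging $N$.

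For the asymptotic estimate, since $\deg\psi=d^N\geq 3$ and the leading coefficient of $\psi$ is positive, one has $\psi(y)/y^2\to +\infty$ as $y\to +\infty$. Given $K>0$ I would pick $Y$ with $\psi(y)\geq K y^2$ for all $y\geq Y$. Applying the minimum-growth argument of the first paragraph to the iterates of $\psi$ itself, using $\psi(y)\geq y+N\delta$, one gets $\min_t \psi_m(t)\to +\infty$, so there exists $m_0$ with $\psi_m(t)\geq Y$ for every $m\geq m_0$ and every $t\in\R$. For such $m$ and $t$, $\psi_m(t)>0$ and hence $|\psi_{m+1}(t)|=\psi(\psi_m(t))\geq K(\psi_m(t))^2$, as required. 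The main obstacle I anticipate is the dual role forced on $N$: it must simultaneously push the range of $\psi_m$ above the zeros of $\varphi$ and guarantee super-quadratic growth for $\psi$, and it is precisely the even-degree hypothesis that permits both ---if $\varphi$ had odd degree, $\varphi(x)-x$ could change sign, destroying the uniform lower bound $\delta$, and the leading term of the iterates could no longer be controlled in a sign-coherent way.
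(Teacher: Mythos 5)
Your proposal is correct and follows essentially the same route as the paper's proof: a uniform gap $\delta>0$ between $\varphi(x)$ and $x$ forces the minima (resp.\ maxima) of the iterates to diverge, so a suitable $\varphi_N$ has no zeros or fixed points and degree at least $4$, after which the superquadratic growth of $\psi$ at infinity gives the estimate. Your explicit bookkeeping of the leading coefficients and of the condition $d^N\geq 3$ only makes precise what the paper leaves implicit.
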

\begin{proof}
 We assume that $\varphi(t) > t$ for every $t\in {\mathbb R}$ and put $\Phi(t) = \varphi(t)-t.$ Since
 $$
 \lim_{t\to -\infty}\Phi(t) = \lim_{t\to +\infty}\Phi(t) = +\infty
 $$ then there is $a > 0$ such that $\Phi(t) \geq a$ for every $t\in {\mathbb R}.$ An induction argument then gives
 $$
 \varphi_{n}(t) \geq t + na
 $$ for every $n\in {\mathbb N}$ and $t\in {\mathbb R}.$ Consequently
 $$
 \varphi_{n}(t) = \varphi_{n-1}\left(\varphi(t)\right) \geq \varphi(t) + (n-1)a \geq \varphi(t_0) + (n-1)a
 $$ where $\varphi(t_0)$ is the minimum of $\varphi.$ Hence, for $N$ big enough the iterate $\psi = \varphi_{N}$ is a polynomial of even degree greater than 4 without zeros nor fixed points.
 \par\medskip
 Let us now fix $K > 0$ and observe that there is $b > 0$ such that $\psi(s)\geq K s^2$ whenever $s\geq b.$ We choose $m_0$ so that
 $$
 \psi_{m}(t)\geq b
 $$ for all $t\in {\mathbb R}$ and for all $m\geq m_0.$ Then
 $$
\psi_{m+1}(t) = \psi\left(\psi_{m}(t)\right) \geq K \left(\psi_{m}(t)\right)^2.
 $$ The case $\varphi(t) < t$ for every $t\in {\mathbb R}$ is treated in a similar way.
\end{proof}

\begin{thm}\label{theo:polynomial_symbol}{\rm Let $\varphi$ be a polynomial with degree greater than or equal to two. Then, the following are equivalent:
\begin{itemize}
\item[(1)] $C_\varphi$ is power bounded.
\item[(2)] $C_\varphi$ is mean ergodic.
\item[(3)] The degree of $\varphi$ is even and it has no fixed points.
\end{itemize}
}
\end{thm}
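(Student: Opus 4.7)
My plan is to prove the three implications $(1)\Rightarrow(2)\Rightarrow(3)\Rightarrow(1)$.

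The implication $(1)\Rightarrow(2)$ is immediate since, as recalled in the introduction, on the Fr\'echet--Montel space $\mathcal{S}(\mathbb{R})$ every power bounded operator is (uniformly) mean ergodic. For $(2)\Rightarrow(3)$ I argue by contraposition. If $\deg\varphi$ is odd then $\varphi(x)-x$ is a real polynomial of odd degree, hence has a real root, so $\varphi$ has a fixed point; if $\deg\varphi$ is even and (3) fails then by hypothesis $\varphi$ has a fixed point as well. Pick such a fixed point $x_0$: its orbit $\{\varphi_n(x_0)\}$ is bounded. On the other hand, since $\deg\varphi\geq 2$ there is $L>0$ with $|\varphi(x)|\geq 2|x|$ for $|x|\geq L$, and iterating this gives $|\varphi_n(x)|\to\infty$ for every $|x|\geq L$. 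Proposition \ref{eventuallydivergent} then shows that $C_\varphi$ is not mean ergodic, contradicting (2).

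The substantive implication is $(3)\Rightarrow(1)$. I would apply Lemma \ref{lem:iterates-symbol} to pass to an iterate $\psi=\varphi_N$ that has no real zeros, no fixed points, and (after possibly enlarging $N$) satisfies $\psi_{m+1}(t)\geq K\psi_m(t)^2$ on $\mathbb{R}$ for every $m$ past some $m_0$, with the constant $K$ as large as desired. Since $\{C_\varphi^n\}_n$ decomposes into the $N$ families $\{C_\varphi^r\circ C_\psi^m\}_m$ for $0\leq r<N$, equicontinuity of $\{C_\psi^m\}_m$ yields equicontinuity of $\{C_\varphi^n\}_n$, so it suffices to establish that $C_\psi$ is power bounded. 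Reducing as in the proof of Lemma \ref{lem:iterates-symbol} to the case $\psi>0$ and $\psi(t)>t$ on $\mathbb{R}$ (the opposite sign case is symmetric), the sequence $(\psi_m(t))_m$ is increasing, so $|\psi_m(t)|\geq\psi(t)\geq|t|^{1/k}$ for $|t|\geq k$, yielding condition (ii) of Proposition \ref{prop:powerbounded} uniformly in $m$.

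It remains to verify condition (i) of Proposition \ref{prop:powerbounded}, namely to find $C_j,p_j$ with $|\psi_n^{(j)}(x)|\leq C_j(1+\psi_n(x)^2)^{p_j}$ uniformly in $x$ and $n$; this is the main technical obstacle. I would proceed by induction on $j$. Setting $B_n(x):=1+\psi_n(x)^2$, the superquadratic estimate upgrades to $B_{n+1}\geq K'B_n^2$ with $K'$ as large as wanted. Fa\`a di Bruno applied to $\psi_{n+1}=\psi\circ\psi_n$ gives
\[
\psi_{n+1}^{(j)}(x)=\psi'(\psi_n(x))\,\psi_n^{(j)}(x)+g_n(x),
\]
where $g_n(x)$ is a sum of terms of the form $\psi^{(k)}(\psi_n(x))\prod_{\ell<j}(\psi_n^{(\ell)}(x))^{k_\ell}$ with $k\geq 2$; combining $|\psi^{(k)}(s)|\leq C(1+s^2)^{(d-k)/2}$ with the inductive hypothesis yields a uniform estimate $|g_n(x)|\leq CB_n(x)^P$. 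Defining $R_n:=|\psi_n^{(j)}(x)|/B_n(x)^{p_j}$ with $p_j\geq\max\{(d-1)/2,P/2\}$, the bound $B_{n+1}\geq K'B_n^2$ turns the recurrence into $R_{n+1}\leq c_1R_n+c_2$ with $c_1<1$ once $K'$ is large enough. Hence $R_n$ stays bounded for $n\geq m_0$; the finitely many earlier values are handled directly because each $\psi_n$ is itself a symbol. The delicate point is precisely this calibration of $p_j$ and $K'$ so that the superquadratic growth of the $B_n$ simultaneously absorbs the multiplicative factor $\psi'(\psi_n(x))$ and the inductively controlled remainder $g_n$.
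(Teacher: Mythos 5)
Your proposal is correct and follows essentially the same route as the paper: $(2)\Rightarrow(3)$ via a bounded orbit at a fixed point combined with Proposition \ref{eventuallydivergent}, and $(3)\Rightarrow(1)$ by passing to an iterate via Lemma \ref{lem:iterates-symbol} and verifying the uniform symbol conditions of Proposition \ref{prop:powerbounded} by induction on the order of the derivative, using the superquadratic growth $\left|\psi_{m+1}\right|\geq K\psi_m^2$ to absorb both the chain-rule factor and the Fa\`a di Bruno remainder. The only difference is bookkeeping: you run a linear recurrence $R_{n+1}\leq c_1R_n+c_2$ with $c_1<1$, while the paper proves the explicit bound $\left|(\varphi_m)^{(n)}\right|/n!\leq B_n^n\left|\varphi_m\right|^{2n}$ by a double induction; both rest on the same mechanism.
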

\begin{proof}
$(2)\Rightarrow (3).$ We assume that $C_\varphi$ is mean ergodic but $\varphi$ has some fixed point. Then there is at least one point with bounded orbit. Moreover, since $\varphi$ is a polynomial of degree greater than or equal to two we may find $K>0$ such that $|t|>K$ implies $|\varphi(t)| > 2|t|,$ hence $(|\varphi_n(t)|)_n$ goes to infinity for $|t|>K.$ According to Proposition \ref{eventuallydivergent} $C_\varphi$ is not mean ergodic, which is a contradiction. As every polynomial with odd degree greater than one has at least a fixed point, the proof is complete.
\par\medskip
$(3)\Rightarrow (1).$ We first observe that $C_\varphi$ is power bounded if and only if there is $N$ such that $C^N_\varphi$ is power bounded. So, after replacing $\varphi$ by $\varphi_{N}$ for appropriate $N$ we can assume, without loss of generality, that $\varphi$ has neither zeros nor fixed points and enjoy the additional property that $\left|\varphi(t)\right| \geq 1$ and for every $K > 0$ there is $m_0\in {\mathbb N}$ such that
$$
\left|\varphi_{m+1}(t)\right| \geq K \left(\varphi_{m}(t)\right)^2\ \ \forall m\geq m_0,\ \ \ \forall t\in {\mathbb R}
$$ (see Lemma \ref{lem:iterates-symbol}). Condition (ii) in Proposition \ref{prop:powerbounded} is automatically satisfied.

As $\varphi$ is a polynomial which does not vanish, there is $C\geq 1$ such that $|\varphi^{(j)}(t)|\leq C |\varphi(t)|$ for every $t\in {\mathbb R}$ and $j\in {\mathbb N}.$ Let us write
$$
C_n = C\sum \frac{1}{k_1!\dots k_n!}$$ where the sum is extended to all multi-indices such that
$$
 k_1 + 2k_2 + \ldots + nk_n = n.$$ We may find an increasing sequence $(m_n)$ of natural numbers with the property that
$$
\left|\varphi_{m+1}(t)\right| \geq C_n \left(\varphi_{m}(t)\right)^2\ \ \forall m\geq m_n,\ \ \ \forall t\in {\mathbb R}.
$$
\par\medskip
We {\bf claim} that there is $B_n > C_n$ such that for $m\geq m_n,$ and $n \in {\mathbb N},$
\begin{equation}\label{eq:induction}
 \left|\frac{(\varphi_{m})^{(n)}(t)}{n!}\right|\leq B_n^n \left|\varphi_{m}(t)\right|^{2n}\end{equation} for every $t\in {\mathbb R}.$ First, for $n = 1$ we take $B_1$ such that the previous inequality is satisfied for $m = m_1.$ Now, assuming that the inequality holds for $n = 1$ and some $m \geq m_1$ we obtain
$$
\begin{array}{*2{>{\displaystyle}l}} \left|\frac{(\varphi_{m+1})'(t)}{(\varphi_{m+1}(t))^2}\right| & = \frac{|\varphi'(\varphi_{m}(t))\cdot (\varphi_{m})'(t)|}{|\varphi_{m+1}(t)|^2} \leq \frac{C |\varphi_{m+1}(t)|B_1|\varphi_{m}(t)|^2}{|\varphi_{m+1}(t)|^2}\\ & \\ & = C B_1\frac{|\varphi_{m}(t)|^2}{|\varphi_{m+1}(t)|}\leq B_1.\end{array}
$$ Consequently (\ref{eq:induction}) holds for $n =1$ and $m\geq m_1.$ We now take $p\geq 2$ and assume that (\ref{eq:induction}) holds for $n = 1,\ldots, p-1.$ We take
$$
B_p > \max\{C_p, B_1, \dots, B_{p-1}\}$$ such that
$$
\left|(\varphi_{m_p})^{(j)}(t)\right|\leq  B_p^j \left(\varphi_{m_p}(t)\right)^{2j}
$$ for every $j\in {\mathbb N}$ and $t\in {\mathbb R}.$ This selection can be done since $\varphi_{m_p}$ is a polynomial. In particular, condition $(\ref{eq:induction})$ holds for $n = p$ and $m = m_p.$ Now, we assume that condition $(\ref{eq:induction})$ holds for $n = p$ and some $m\geq m_p.$ According to our induction hypothesis, the condition is also satisfied for this $m$ and every derivative or order less than $p.$ Recall that
$$
\begin{array}{*2{>{\displaystyle}l}}
\frac{1}{p!}\left|(\varphi_{m+1})^{(p)}(t)\right| & \leq \sum \frac{1}{k_1!\dots k_m!}|\varphi^{(k)}(\varphi_{m}(t))|\prod_j \left(\frac{|(\varphi_{m})^{(j)}(t)|}{j!}\right)^{k_j}\\ & \\ & \leq \sum \frac{1}{k_1!\dots k_m!}C|\varphi(\varphi_{m}(t))|\prod_j \left(B_j^{jk_j}\cdot \left(\varphi_{m}(t)\right)^{2jk_j}\right),
\end{array}
$$ where the sum is extended to all multi-indices such that $$k_1 + 2k_2 + \ldots + pk_p = p.$$ Then
$$
\begin{array}{*2{>{\displaystyle}l}}
\left|\frac{(\varphi_{m+1})^{(p)}(t)}{p!(\varphi_{m+1}(t))^{2p}}\right|& \leq C_p |\varphi_{m+1}(t)|B_p^p\frac{|\varphi_{m}(t)|^{2p}}{|\varphi_{m+1}(t)|^{2p}}\\ & \\ & = C_p B_p^p\frac{|\varphi_{m}(t)|^{2p}}{|\varphi_{m+1}(t)|^{2p-1}}\leq B_p^p.
\end{array}
$$ The last inequality follows from the fact that $\left|\varphi_{m+1}(t)\right| \geq C_p\left(\varphi_{m}(t)\right)^2$ and $\left|\varphi_{m}(t)\right|\geq 1.$ The claim is proved. Now it easily follows condition (i) in Proposition \ref{prop:powerbounded}. Hence $C_\varphi$ is power bounded.
\end{proof}

\begin{corollary}{\rm Let $\varphi$ be a polynomial with degree greater than one. Then, the following are equivalent:
\begin{itemize}
\item[(1)] $C_\varphi$ is power bounded.
\item[(2)] $C_\varphi$ is mean ergodic.
\item[(3)] $\left(C_{\varphi_n}f\right)_n$ converges to zero as $n$ goes to infinity for every $f\in {\mathcal S}({\mathbb R}).$
\end{itemize}
}
\end{corollary}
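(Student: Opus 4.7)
The plan is to invoke Theorem \ref{theo:polynomial_symbol}, which already supplies the equivalence (1)$\Leftrightarrow$(2), and then close the triangle by establishing (3)$\Rightarrow$(1) and (1)$\Rightarrow$(3).

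The implication (3)$\Rightarrow$(1) is essentially free: if $C_{\varphi_n}f\to 0$ in $\mathcal{S}(\mathbb{R})$ for every $f$, then in particular every orbit $\{C_{\varphi_n}f:n\in\mathbb{N}\}$ is a bounded subset of $\mathcal{S}(\mathbb{R})$; since $\mathcal{S}(\mathbb{R})$ is Fr\'echet, the characterization of power boundedness by pointwise boundedness (recalled in the introduction) gives (1).

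For (1)$\Rightarrow$(3) I would first extract the conclusion of Theorem \ref{theo:polynomial_symbol}, so that $\varphi$ has even degree and no fixed points. Looking inside the proof of Lemma \ref{lem:iterates-symbol}, one actually obtains a uniform lower bound: assuming $\varphi(t)>t$ for every $t$ and setting $a=\min_{t\in\mathbb{R}}(\varphi(t)-t)>0$ and $m=\min_{t\in\mathbb{R}}\varphi(t)$, one has $\varphi_n(t)\geq m+(n-1)a$ for all $t\in\mathbb{R}$ and all $n$; the case $\varphi(t)<t$ is symmetric. Thus $|\varphi_n(t)|\to\infty$ uniformly in $t$. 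Consequently, for any $f\in\mathcal{D}(\mathbb{R})$ with support contained in $[-M,M]$ there is $n_0$ with $f\circ\varphi_n\equiv 0$ for all $n\geq n_0$, so $C_{\varphi_n}f\to 0$ in $\mathcal{S}(\mathbb{R})$ trivially.

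To upgrade this from $\mathcal{D}(\mathbb{R})$ to all of $\mathcal{S}(\mathbb{R})$, I would use density of $\mathcal{D}(\mathbb{R})$ in $\mathcal{S}(\mathbb{R})$ together with the equicontinuity of $\{C_{\varphi_n}:n\in\mathbb{N}\}$ granted by power boundedness: given $f\in\mathcal{S}(\mathbb{R})$, a seminorm $\pi_k$, and $\varepsilon>0$, equicontinuity produces $\pi_j$ and $\delta>0$ controlling all $\pi_k\circ C_{\varphi_n}$ by $\delta^{-1}\pi_j$, one approximates $f$ within $\delta\varepsilon/2$ in $\pi_j$ by some $g\in\mathcal{D}(\mathbb{R})$, and then $\pi_k(C_{\varphi_n}f)\leq \pi_k(C_{\varphi_n}(f-g))+\pi_k(C_{\varphi_n}g)<\varepsilon$ for large $n$. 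The only mildly delicate step is verifying that the estimate harvested from Lemma \ref{lem:iterates-symbol} is truly uniform in $t$ (and not merely pointwise), which is what allows compactly supported test functions to be eventually annihilated by the $C_{\varphi_n}$; once that is settled, everything else is routine.
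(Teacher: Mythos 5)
Your proposal is correct. The reduction to proving (1)$\Rightarrow$(3) (via Theorem \ref{theo:polynomial_symbol} and the Fr\'echet characterization of power boundedness by bounded orbits) matches the paper, but your proof of (1)$\Rightarrow$(3) takes a genuinely different route. The paper argues softly: from the proof of Theorem \ref{theo:polynomial_symbol} the iterates $\varphi_n(t)$ diverge to infinity for every $t$, so $C_{\varphi_n}f\to 0$ pointwise; since power boundedness on the Fr\'echet--Montel space ${\mathcal S}({\mathbb R})$ makes every orbit relatively compact, the only possible accumulation point of an orbit is the zero function, and the whole sequence converges to $0$ in ${\mathcal S}({\mathbb R})$. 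You instead extract the \emph{uniform} estimate $\varphi_n(t)\geq \min\varphi+(n-1)a$ from the proof of Lemma \ref{lem:iterates-symbol} --- your reading is accurate, since for even degree and $\varphi>\mathrm{id}$ the polynomial $\varphi$ attains its minimum, and the symmetric case is analogous --- so that $C_{\varphi_n}$ eventually annihilates every compactly supported function, and then you pass to all of ${\mathcal S}({\mathbb R})$ by density of ${\mathcal D}({\mathbb R})$ together with the equicontinuity of $\{C_{\varphi_n}:n\in{\mathbb N}\}$ supplied by power boundedness. Both arguments are sound; the paper's is shorter because it leans on the Montel property, while yours avoids compactness entirely and would transfer to any Fr\'echet function space in which ${\mathcal D}({\mathbb R})$ is densely and continuously embedded, at the modest cost of an explicit $\varepsilon$-argument.
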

\begin{proof}
 It suffices to show $(1)\Rightarrow (3).$ It follows from the proof of Theorem \ref{theo:polynomial_symbol} that there is $N\in {\mathbb N}$ such that $\left(\varphi_{Nk}(t)\right)_k$ goes to infinity for every $t\in {\mathbb R}.$ Since every orbit of $C_\varphi$ is relatively compact we easily conclude.
\end{proof}

Next result can be proved with the same ideas of Theorem \ref{theo:polynomial_symbol} but it is technically more difficult.

\begin{thm}\label{cor:exponencial-pol}{\rm
Let $\varphi$ be a polynomial of even degree without fixed points. If $\varphi$ is bounded below and $\psi:=e^{\varphi}$ then $C_{\psi}$ is power bounded.
}
\end{thm}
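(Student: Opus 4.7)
The plan is to verify the two conditions of Proposition \ref{prop:powerbounded} for $\psi = e^\varphi$, following the scheme of Theorem \ref{theo:polynomial_symbol} and exploiting the fact that the exponential nature of $\psi$ makes the iterates grow doubly exponentially. I would first reduce to the case $\varphi(t) \geq t + a$ for some $a > 0$: since $\varphi$ has even degree $d \geq 2$ and is bounded below, $\varphi - \mathrm{id}$ has the same degree with positive leading coefficient and therefore attains its minimum on $\R$, which must be strictly positive because $\varphi$ has no fixed points. A quick check of the minimum of $e^{t+a} - t$ yields $\psi(t) \geq t + a'$ for some $a' > 0$, hence by induction $\psi_n(t) \geq \psi(t) + (n-1)a'$ uniformly in $t$. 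Combined with $\psi(t) \geq e^{c|t|^d/2}$ for $|t|$ large, this shows both that $\psi$ is itself a symbol for $\mathcal{S}(\R)$ (via Theorem \ref{symbols}) and that condition (ii) of Proposition \ref{prop:powerbounded} holds uniformly in $n$. It also shows that $\psi_{m-1}(t)$ can be made arbitrarily large uniformly in $t$ by taking $m$ large, so that $\psi_m(t) = e^{\varphi(\psi_{m-1}(t))} \geq e^{c\psi_{m-1}(t)^d/2}$ eventually holds.

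The main work is condition (i). I would prove, by induction on the derivative order $n$ with an inner induction on $m$, the uniform bound
$$\left|\frac{(\psi_m)^{(n)}(t)}{n!}\right| \leq B_n^n\, \psi_m(t)^{2n} \qquad \forall\, m \geq m_n,\ \forall\, t \in \R,$$
in analogy with estimate (\ref{eq:induction}) of Theorem \ref{theo:polynomial_symbol}. As a preliminary one has $|\psi^{(k)}(t)| \leq D_k \psi(t)^2$ on all of $\R$: indeed $\psi^{(k)}/\psi$ is a polynomial in $\varphi', \ldots, \varphi^{(k)}$, hence polynomially bounded in $|t|$ and dominated by $\psi(t)$ outside a compact set (on which $\psi$ is bounded below). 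Applying Fa\`a di Bruno to $\psi_m = \psi \circ \psi_{m-1}$ and isolating the unique term that involves $(\psi_{m-1})^{(n)}$, the inductive hypothesis bounds the remainder by $M_n \psi_m(t)^2 \psi_{m-1}(t)^{2n}$, while the leading coefficient satisfies $|\psi'(\psi_{m-1})| \leq D_1 \psi_m^2$. This yields the recursive estimate
$$\left|\frac{(\psi_m)^{(n)}}{n!}\right| \leq D_1 \psi_m^2 \left|\frac{(\psi_{m-1})^{(n)}}{n!}\right| + M_n \psi_m^2 \psi_{m-1}^{2n},$$
which closes to $B_n^n \psi_m^{2n}$ provided $\psi_{m-1}^{2n}/\psi_m^{2n-2} \to 0$ uniformly in $t$ as $m \to \infty$.

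This last limit is where the exponential structure of $\psi$ is decisive: the double-exponential estimate $\psi_m \geq e^{c\psi_{m-1}^d/2}$ forces $\psi_m^{2n-2}$ to dominate any polynomial in $\psi_{m-1}$ as soon as $n \geq 2$, so taking $m_n$ large enough makes the recursion contractive and $B_n$ can be chosen to absorb both the combinatorial constant $M_n$ and the base case at $m = m_n$. The case $n=1$ must be treated separately since then $2n-2 = 0$ and the argument above fails; here I would write $(\psi_m)' = \varphi'(\psi_{m-1})\psi_m (\psi_{m-1})'$, use $|\varphi'(\psi_{m-1})| \leq C \psi_{m-1}^{d-1}$, and reduce the inductive step to $C\psi_{m-1}^{d+1} \leq \psi_m$, which again follows from the double-exponential estimate. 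The base case $m = m_n$ of the inner induction is handled by showing that $|(\psi_{m_n})^{(n)}(t)|/\psi_{m_n}(t)^{2n}$ is bounded on $\R$ (in fact it tends to $0$ at $\pm\infty$) via iterated Fa\`a di Bruno and the preliminary bound $|\psi^{(k)}| \leq D_k \psi^2$, which permits $B_n$ to be enlarged to dominate this fixed bound too. For the finitely many $m < m_n$, each $\psi_m$ is a symbol so a uniform estimate is automatic; combining everything gives condition (i) of Proposition \ref{prop:powerbounded} and hence power boundedness of $C_\psi$. The main obstacle is the bookkeeping of the constants $D_k, M_n, B_{n-1}$ and the choice of $m_n$ so that a single $B_n$ suffices for both the base case and the inductive step; the super-exponential growth of $\psi$ provides exactly the slack needed for this to work.
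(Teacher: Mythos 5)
The paper gives no proof of this theorem---it only remarks that it can be proved with the same ideas as Theorem \ref{theo:polynomial_symbol} but is technically more difficult---and your proposal is a correct implementation of precisely that strategy: verifying the uniform symbol conditions of Proposition \ref{prop:powerbounded}, establishing the analogue of Lemma \ref{lem:iterates-symbol} for $\psi=e^{\varphi}$, and running the double induction behind estimate (\ref{eq:induction}) with the new inputs $|\psi^{(k)}|\leq D_k\psi^2$ and the super-exponential growth $\psi_m\geq e^{c\,\psi_{m-1}^{d}/2}$ supplying the slack that closes the recursion (including the separate treatment of $n=1$). I find no gap: the constant bookkeeping you flag does work out in the order you indicate (first choose $m_n$ using the uniform divergence of $\psi_m$, then choose $B_n$ to cover the base case $m=m_n$).
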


\section{Spectra of composition operators}

In what follows ${\mathbb D}$ will always denote the open unit disc in ${\mathbb C}.$ We recall that $\sigma_p(C_\varphi)$ is the set of all eigenvalues of $C_\varphi$ and $\sigma(C_\varphi)$ is the spectrum of $C_\varphi,$ that is the set of all $\lambda\in {\mathbb C}$ such that $C_\varphi - \lambda \mbox{Id}:{\mathcal S}({\mathbb R})\to {\mathcal S}({\mathbb R})$ does not admit a continuous linear inverse.

\begin{proposition}\label{prop:eigenvalues_disc}{\rm For every symbol $\varphi$ we have $\sigma_p(C_\varphi) \subset \overline{{\mathbb D}}.$ If all the orbits of $\varphi$ are unbounded then $\sigma_p(C_\varphi) \subset {\mathbb D}.$
 }
\end{proposition}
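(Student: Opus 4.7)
The strategy is the standard one for eigenvalue arguments via iteration: turn the eigenvalue equation $C_\varphi f = \lambda f$ into a pointwise growth/decay condition on $f$ along the orbits of $\varphi$, and then exploit the fact that every $f\in \mathcal{S}(\mathbb{R})$ is bounded and vanishes at infinity.

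First I would fix $\lambda\in \sigma_p(C_\varphi)$ and a corresponding nonzero eigenfunction $f\in \mathcal{S}(\mathbb{R})$, so that $f(\varphi(x)) = \lambda f(x)$ for every $x\in \mathbb{R}$. Iterating this identity one immediately obtains
\begin{equation*}
f\bigl(\varphi_n(x)\bigr) = \lambda^n f(x) \qquad \forall\, x\in \mathbb{R},\ \forall\, n\in \mathbb{N}.
\end{equation*}
Since $f\in \mathcal{S}(\mathbb{R})$ is in particular bounded, choose $x_0\in \mathbb{R}$ with $f(x_0)\neq 0$ and estimate
\begin{equation*}
|\lambda|^n |f(x_0)| = \bigl|f(\varphi_n(x_0))\bigr| \leq \|f\|_\infty.
\end{equation*}
This forces $|\lambda|\leq 1$, proving the first assertion $\sigma_p(C_\varphi)\subset \overline{\mathbb{D}}$.

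For the second assertion, assume that every orbit of $\varphi$ is unbounded and suppose for a contradiction that $|\lambda|=1$. Fix any $x\in \mathbb{R}$. By hypothesis the sequence $\bigl(\varphi_n(x)\bigr)_n$ is unbounded, so we may extract a subsequence $n_k$ with $|\varphi_{n_k}(x)|\to \infty$. Because $\mathcal{S}(\mathbb{R})\hookrightarrow C_0(\mathbb{R})$ (every Schwartz function vanishes at infinity), we obtain $f(\varphi_{n_k}(x))\to 0$. But the iterated eigenvalue identity together with $|\lambda|=1$ gives
\begin{equation*}
|f(x)| = |\lambda|^{n_k}|f(x)| = \bigl|f(\varphi_{n_k}(x))\bigr| \longrightarrow 0,
\end{equation*}
so $f(x)=0$. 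Since $x$ was arbitrary, $f\equiv 0$, contradicting that $f$ is an eigenfunction. Hence $|\lambda|<1$, which yields $\sigma_p(C_\varphi)\subset \mathbb{D}$.

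There is really no serious obstacle here; the whole argument is two short applications of the iterated eigenvalue equation, using only boundedness of $f$ in the first part and the $C_0$-decay of Schwartz functions in the second. The only thing to double-check is that ``unbounded orbit'' is meant in the sense $\sup_n|\varphi_n(x)|=\infty$ (so a subsequence actually escapes to infinity), which is the reading consistent with earlier uses in the paper; with that understood, the argument goes through verbatim.
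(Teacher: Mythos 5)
Your proof is correct and follows essentially the same route as the paper: iterate the eigenvalue identity to get $f(\varphi_n(x))=\lambda^n f(x)$, use boundedness of $f$ at a point where $f\neq 0$ for $|\lambda|\leq 1$, and use the decay of $f$ at infinity along an escaping subsequence of an unbounded orbit for $|\lambda|<1$. The only cosmetic difference is that the paper applies the second step directly at the chosen point $t_0$ with $f(t_0)\neq 0$ rather than deducing $f\equiv 0$ by contradiction.
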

\begin{proof} Take $f\in {\mathcal S}({\mathbb R})\setminus\left\{0\right\}$ and $\lambda \in {\mathbb C}$ such that $C_\varphi(f) = \lambda f$ and fix $t_0\in {\mathbb R}$ with the property that $f(t_0)\neq 0.$ From the identity
$$
f\left(\varphi_n(t_0)\right) = \lambda^n f(t_0)
$$ we deduce that $\left(\lambda^n\right)_{n\in {\mathbb N}}$ is a bounded sequence, from where the conclusion follows. In the case that $\left(\varphi_n(t_0)\right)_{n\in {\mathbb N}}$ is unbounded we can take a subsequence $\left(\varphi_{n_k}(t_0)\right)_k$ diverging to infinity, hence
$$
\lim_{k\to \infty}f\left(\varphi_{n_k}(t_0)\right) = 0,
$$ and $|\lambda| < 1.$
\end{proof}

In the case that the symbol is injective, more can be said.

\begin{proposition}\label{prop:spectra_increasing}{\rm Let $\varphi$ be an strictly increasing symbol for ${\mathcal S}({\mathbb R}).$ Then
\begin{itemize}
 \item[(a)] $\sigma_p\left(C_\varphi\right)\subset \left\{1\right\}.$
\item[(b)] $\sigma_p\left(C_\varphi\right) = \left\{1\right\}$ if and only if the set $\left\{t: \varphi(t) = t\right\}$ has interior points.
\end{itemize}
}
\end{proposition}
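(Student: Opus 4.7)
For (a), suppose $C_\varphi f=\lambda f$ with $f\in\mathcal S(\mathbb R)\setminus\{0\}$. The first remark is that strict monotonicity together with $|\varphi(x)|\to\infty$ (Theorem~\ref{symbols}(ii)) makes $\varphi$ a bijection of $\mathbb R$, so $\lambda=0$ is impossible (it would force $f\equiv 0$ on $\varphi(\mathbb R)=\mathbb R$) and the identity $f(\varphi_n(t))=\lambda^n f(t)$ extends to every $n\in\mathbb Z$. Fix $t_0$ with $f(t_0)\neq 0$. The case $|\lambda|<1$ fails at once by letting $n\to-\infty$: $|\lambda^n f(t_0)|\to\infty$ conflicts with $|f(\varphi_n(t_0))|\le\|f\|_\infty$. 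The harder case $|\lambda|=1$ will be settled by monotonicity: the sequence $(\varphi_n(t_0))_n$ is monotone, hence convergent in $\overline{\mathbb R}$ to some $\varphi^*(t_0)$; if $\varphi^*(t_0)=\pm\infty$, Schwartz decay forces $f(\varphi_n(t_0))\to 0$ whereas $|\lambda^n f(t_0)|=|f(t_0)|\neq 0$, and if $\varphi^*(t_0)\in\mathbb R$, continuity yields $\lambda^n\to f(\varphi^*(t_0))/f(t_0)$, a nonzero limit on the unit circle, which forces $\lambda=1$ via $L=\lambda L$. The delicate point of (a) is precisely the $|\lambda|=1,\,\lambda\neq 1$ subcase, which this orbit-convergence step dispatches.

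For (b), direction ($\Leftarrow$) is immediate: given $(a,b)\subset F$, choose any nonzero $f\in\mathcal D(\mathbb R)$ with $\mathrm{supp}(f)\subset(a,b)$; continuity extends $\varphi=\mathrm{id}$ from $(a,b)$ to $[a,b]$, and strict monotonicity then makes $\varphi$ preserve each of $(-\infty,a]$, $[a,b]$ and $[b,+\infty)$, yielding $f\circ\varphi=f$, so that (a) gives $\sigma_p(C_\varphi)=\{1\}$. For ($\Rightarrow$), take $f\in\mathcal S(\mathbb R)\setminus\{0\}$ with $f\circ\varphi=f$ and show $F$ has interior. On each connected component $(a_n,b_n)$ of $\mathbb R\setminus F$ the orbit $(\varphi_k(t))_k$ is strictly monotone and tends to a single endpoint of the component, so invariance of $f$ together with either continuity at a finite endpoint or Schwartz decay at $\pm\infty$ forces $f$ to be constant on $[a_n,b_n]$; hence $f'\equiv 0$ on the open set $\mathbb R\setminus F$. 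The main obstacle---and the step that truly uses smoothness rather than mere continuity---is the final promotion: if $F$ had empty interior then $\mathbb R\setminus F$ would be dense, continuity of $f'$ would upgrade this to $f'\equiv 0$ on $\mathbb R$, whence $f$ would be a constant Schwartz function and therefore $0$, contradicting $f\neq 0$. Thus $F$ contains an interval.
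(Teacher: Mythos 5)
Your argument is correct and follows essentially the same route as the paper: part (a) rests on the monotone convergence of the forward and backward orbits of an increasing bijection, and part (b) on the fact that a $\lambda=1$ eigenfunction must be constant on each component of the complement of the fixed-point set (the paper phrases this contrapositively, localizing to an interval where $f'\neq 0$, but the dynamical core is identical). The only omission is the subcase $|\lambda|>1$ in (a), which is dispatched exactly like your $|\lambda|<1$ case by letting $n\to+\infty$, or by invoking Proposition \ref{prop:eigenvalues_disc}.
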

\begin{proof}
(a) Let $\lambda\in {\mathbb C}$ and $f\in {\mathcal S}({\mathbb R})\setminus \left\{0\right\}$ be given such that $C_\varphi(f) = \lambda f.$ Since $\varphi:{\mathbb R}\to {\mathbb R}$ is a bijection then $\lambda \neq 0.$ Denote $\psi = \varphi^{-1},$ fix $t\in {\mathbb R}$ with $f(t)\neq 0$ and observe that
$$
f\left(\varphi_n(t)\right) = \lambda^n f(t)\ \ \mbox{and}\ \ f\left(\psi_n(t)\right) = \lambda^{-n} f(t)
$$ for every $t\in {\mathbb R}.$ Since the orbits $\left(\varphi_n(t)\right)_n$ and $\left(\psi_n(t)\right)_n$ are monotone and convergent in ${\mathbb R}\cup \left\{\pm \infty\right\}$ and $f$ is continuous and vanishes at infinity we conclude that both sequences $\left(\lambda^n\right)_n$ and $\left(\lambda^{-n}\right)_n$ are convergent. Hence $\lambda = 1.$
\par\medskip\noindent
(b) Let us first assume that $f\in {\mathcal S}({\mathbb R})\setminus \left\{0\right\}$ and $C_\varphi(f) = f.$ Take $I = (a,b)$ with the property that $f^\prime(t)\neq 0$ for every $t\in I.$ We now show that $\varphi(t) = t$ for every $t\in I.$ Otherwise, there are $t\in I$ and $\varepsilon > 0$ such that $\varphi(s) - s$ has constant sign in the interval $J = (t-\varepsilon, t+\varepsilon)\subset I.$ All the sequences $\left(\varphi_n(s)\right)_n, s\in J,$ are monotone and convergent to the same point in ${\mathbb R}\cup \left\{\pm \infty\right\}.$ From the identity $f\left(\varphi_n(s)\right) = f(s)$ for every $n\in {\mathbb N}$ and $s\in J$ we deduce, after taking limits as $n$ goes to infinity, that $f$ is constant in $J,$ which is a contradiction.
\par\medskip
Let us now assume that there is an open interval $I = (a,b)$ such that $\varphi(t) = t$ whenever $t\in I.$ Then every $f\in {\mathcal D}(a,b)$ satisfies $C_\varphi(f) = f.$ This follows from the fact that $\varphi\left({\mathbb R}\setminus I\right) = {\mathbb R}\setminus I.$
\end{proof}

\begin{proposition}\label{prop:spectra_decreasing}{\rm Let $\varphi$ be an strictly decreasing symbol for ${\mathcal S}({\mathbb R}).$ Then, the following are equivalent
\begin{itemize}
 \item[(a)] $\sigma_p\left(C_\varphi\right)\neq \emptyset.$
\item[(b)] The set $\left\{t: \varphi_2(t) = t\right\}$ has interior points.
\item[(c)] $\sigma_p\left(C_\varphi\right) = \left\{-1,1\right\}.$
\end{itemize}
}
\end{proposition}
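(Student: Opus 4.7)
The plan is to reduce everything to Proposition \ref{prop:spectra_increasing} applied to the strictly increasing symbol $\varphi_2 = \varphi \circ \varphi$, and then recover eigenfunctions of $C_\varphi$ from eigenfunctions of $C_{\varphi_2} = C_\varphi^2$ by a symmetrization trick. The implication (c) $\Rightarrow$ (a) is trivial, so the work lies in (a) $\Rightarrow$ (b) $\Rightarrow$ (c).

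For (a) $\Rightarrow$ (b), suppose $C_\varphi f = \mu f$ with $f \neq 0$. Since $\varphi$ is a bijection, $C_\varphi$ is injective, so $\mu \neq 0$; iterating yields $C_{\varphi_2} f = \mu^2 f$. Now $\varphi_2$ is a strictly increasing symbol, so Proposition \ref{prop:spectra_increasing}(a) gives $\mu^2 = 1$, in particular $\mu \in \{-1,1\}$, and Proposition \ref{prop:spectra_increasing}(b) shows that $\{t:\varphi_2(t)=t\}$ has interior points. This simultaneously gives the inclusion $\sigma_p(C_\varphi)\subset\{-1,1\}$ we need for (c).

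For (b) $\Rightarrow$ (c), let $I$ be an open interval with $\varphi_2|_I = \mathrm{id}$. Since $\varphi$ is strictly decreasing with a unique fixed point $\lambda$, I can shrink $I$ if necessary so that it lies entirely on one side of $\lambda$; then $\varphi(I)$ lies on the other side and $I \cap \varphi(I) = \emptyset$. Pick any nonzero $f \in \mathcal{D}(I)$. Because $\varphi_2$ is a bijection of $\mathbb{R}$ equal to the identity on $I$, one has $\varphi_2^{-1}(I) = I$, hence $f\circ \varphi_2 = f$. Setting
$$
g_+ := f + f\circ \varphi, \qquad g_- := f - f\circ \varphi,
$$
both lie in $\mathcal{S}(\mathbb{R})$ (since $\varphi$ is a symbol), and a direct computation gives $C_\varphi(g_\pm) = f\circ\varphi \pm f\circ\varphi_2 = \pm g_\pm$. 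Because $\mathrm{supp}(f)\subset I$ is disjoint from $\mathrm{supp}(f\circ\varphi)\subset \varphi^{-1}(I)$, we have $g_\pm|_I = f|_I \neq 0$, so both $1$ and $-1$ are eigenvalues.

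The main obstacle is the construction of eigenfunctions for both eigenvalues in (b) $\Rightarrow$ (c): one must arrange the support of $f$ carefully so that $f$ and $f\circ\varphi$ have disjoint supports (to guarantee $g_\pm \neq 0$) while still having $f\circ\varphi_2 = f$ (to force $C_\varphi g_\pm = \pm g_\pm$). The choice of $I$ on one side of the unique fixed point $\lambda$ of $\varphi$ is what reconciles these two requirements.
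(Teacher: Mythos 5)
Your proposal is correct and follows essentially the same route as the paper: both directions reduce to Proposition \ref{prop:spectra_increasing} applied to the increasing symbol $\varphi_2$, and the eigenfunctions for $\pm 1$ are built from a bump function supported in an interval $I$ with $\varphi_2|_I=\mathrm{id}$ and $I\cap\varphi(I)=\emptyset$; your $g_\pm=f\pm f\circ\varphi$ is the same gluing the paper performs with $g\in\mathcal{D}(\varphi(I))$ and $\lambda h=g\circ\varphi$, just written symmetrically.
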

\begin{proof}
$(a)\Rightarrow (b)$ $\varphi_2$ is an strictly increasing symbol for ${\mathcal S}({\mathbb R})$ and $\sigma_p\left(C_{\varphi_2}\right)\neq \emptyset.$ Now we apply Proposition \ref{prop:spectra_increasing}.
\par\medskip\noindent
$(b)\Rightarrow (c)$ From Proposition \ref{prop:spectra_increasing} we have $\sigma_p\left(C_{\varphi_2}\right)\subset \{1\},$ hence $\sigma_p\left(C_{\varphi}\right)\subset \{-1, 1\}.$  Let $(a,b)$ be an open interval such that $\varphi_2(t) = t$ for every $t\in (a,b).$ Fix $t_0\in (a,b)$ with $\varphi(t_0)\neq t_0$ (note that $\varphi$ has exactly one fixed point) and choose $\varepsilon > 0$ with the property that $I = \left(t_0-\varepsilon, t_0+\varepsilon\right) \subset (a,b)$ and the distance between the open intervals $I$ and $J = \varphi(I)$ is strictly positive. We observe that $\varphi(J) = I.$ Now, for $\lambda \in \left\{-1,1\right\},$ choose an arbitrary $g\in {\mathcal D}(J), g\neq 0,$ and define $h\in {\mathcal D}(I)$ by
$$
\lambda h\left(t\right) = g\left(\varphi(t)\right),\ t\in I.
$$ Finally we consider the function $f\in {\mathcal S}({\mathbb R})$ defined by $f = g$ on $J,$ $f = h$ on $I$ and $f = 0$ elsewhere. From the definition of $h$ and the fact that $\lambda^{-1} = \lambda$ we obtain $f\left(\varphi(t)\right) = \lambda f(t)$ for every $t\in {\mathbb R}.$
\end{proof}

\par\medskip
In the case that the symbol is not injective the situation can be completely different, as the following example shows.

\begin{example}\label{ex:eigenvalues_sqrt}{\rm Let $\varphi(x) = \sqrt{x^2 + 1}.$ Then
$$
\sigma_p(C_\varphi) = {\mathbb D}.
$$
}
\end{example}
\begin{proof} The iterates of the symbol are given by
$$
\varphi_n(x) = \sqrt{x^2 + n}.
$$ Since all the orbits of $\varphi$ are unbounded we can apply Proposition \ref{prop:eigenvalues_disc} to conclude $\sigma_p(C_\varphi)\subset {\mathbb D}.$ Hence it suffices to show that ${\mathbb D} \subset \sigma_p(C_\varphi).$ For convenience we denote $\varphi_0(x) = x.$ Take $I = (x_0, y_0) = \left(\frac{1}{4}, \frac{1}{2}\right)$ and observe that $$\varphi_n(I) = \left(x_n, y_n\right)\ \ \mbox{where}\ \ 0 < x_0 < y_0 < x_1 < y_1 < x_2 < y_2 < \ldots
$$ We now fix $\lambda \in{\mathbb C}$ with $|\lambda| <1$ and take a test function $\psi\in {\mathcal D}(I).$ For $x\geq 0$ we define
$$
f(x) = \lambda^n \left(\psi\circ\left(\varphi_n\right)^{-1}\right)(x)\ \ \mbox{if}\ \ x\in\varphi_n(I)\ \ (n = 0,1,2,\ldots)
$$ and
$$
f(x) = 0 \ \ \mbox{in the case that}\ \ x\geq 0, x\notin \bigcup_{n=0}^\infty\varphi_n(I).
$$ We observe that $f(x) = 0$ for every $x$ in a neighborhood of the points $\left\{x_n,y_n\right\}_{n=0}^\infty.$ Finally we extend $f$ to the negative real numbers by $f(-x) = f(x).$ Then $f$ is an smooth function. Our aim is to check that $$ \mbox{(a)}\ f\in {\mathcal S}({\mathbb R})\ \ \mbox{and}\ \ \mbox{(b)}\ C_\varphi(f) = \lambda f.$$
\par
(a) $f\in {\mathcal S}({\mathbb R}).$ Since $(1+x^2)^k \leq \left(n+\frac{5}{4}\right)^k$ for every $x\in \varphi_n(I)$ and
$$
\lim_{n\to \infty} \lambda^n \left(n+\frac{5}{4}\right)^k = 0
$$ we conclude that
$$
\sup_{x\in{\mathbb R}}\left(1+x^2\right)^k\left|f(x)\right| < \infty.
$$In order to control the derivatives of $f,$ we observe that
$$
\left(\varphi_n\right)^{-1}(x) = g(x^2-n),\ \ x\in \varphi_n(I),
$$ where $g(t) = \sqrt{t}.$ According to Fa\`a di Bruno formula
$$
\left(\left(\varphi_n\right)^{-1}\right)^{(j)}(x) = \sum_{j_1 + 2j_2 = j}\frac{j!}{j_1! j_2!}g^{(j_1 + j_2)}(x^2-n)\left(2x\right)^{j_1}.
$$ For every $k\in {\mathbb R}$ there is $C_k > 0$ such that
$$
\left|g^{(k)}(x^2-n)\right| \leq C_k \ \ \forall x\in \varphi_n(I),\ n\in {\mathbb N}.
$$This is so because $x\in \varphi_n(I)$ implies $x^2 - n\in \left(\frac{1}{16}, \frac{1}{4}\right).$ Consequently, there are constants $B_j$ so that
$$
\left|\left(\left(\varphi_n\right)^{-1}\right)^{(j)}(x)\right| \leq B_j \left|x\right|^j\ \ \forall x\in \bigcup_{n=0}^\infty\varphi_n(I).
$$ Now, a new application of Fa\`a di Bruno formula permits us to conclude that
$$
\sup_{x\in {\mathbb R}}\left(1+x^2\right)^k\left|f^{(n)}(x)\right| < \infty \ \ \forall k,n\in {\mathbb N}.
$$
\par
(b) To check $C_\varphi(f) = \lambda f$ it suffices to show that $f\left(\varphi(x)\right) = \lambda f(x)$ for every $x\geq 0.$ We distinguish two cases. If $x\in \varphi_n(I)$ then $\varphi(x)\in \varphi^{n+1}(I)$ and
$$
f\left(\varphi(x)\right) = \lambda^{n+1}\left(\psi\circ\left(\varphi^{n+1}\right)^{-1}\right)\left(\varphi(x)\right) = \lambda^{n+1}\left(\psi\circ\left(\varphi_n\right)^{-1}\right)(x) = \lambda f(x).
$$ On the other hand
$$
x\notin \bigcup_{n=0}^\infty\varphi_n(I) \Rightarrow \varphi(x) \notin \bigcup_{n=0}^\infty\varphi_n(I),
$$ from where it follows $f\left(\varphi(x)\right) = f(x) = 0.$ Here we are using that $\varphi$ is injective in $(0,\infty)$ and $\varphi(x)\notin I$ for every $x\in {\mathbb R}.$
\end{proof}

\begin{proposition}\label{prop:surjective}{\rm Let $E$ be a Fr\'echet space and $T:E\to E$ a continuous and linear operator. We assume that for every $x\in E$ and for every continuous seminorm $p$ on $E$ there is $\ell\in {\mathbb N}$ such that $p\left(T^n(x)\right) = O(n^\ell)$ as $n\to \infty.$ Then $T-\lambda I$ is surjective for every $\lambda\in {\mathbb C}$ with $|\lambda| > 1.$
 }
\end{proposition}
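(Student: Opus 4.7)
The plan is to construct a preimage by means of the Neumann series for $(\lambda I - T)^{-1}$. Fix $\lambda \in {\mathbb C}$ with $|\lambda| > 1$ and $y \in E$. For each $N \in {\mathbb N}_0$ define the partial sum
$$
S_N := \sum_{n=0}^{N} \lambda^{-n-1}\, T^n(y).
$$
I will show that $(S_N)_N$ converges in $E$ to some $x$, and then verify that $(T - \lambda I)(-x) = y$.

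For the first step, fix an arbitrary continuous seminorm $p$ on $E$. By the hypothesis applied to the element $y$ and the seminorm $p$, there exist $\ell \in {\mathbb N}$ and $C > 0$ such that $p(T^n y) \leq C(1+n)^\ell$ for every $n$. Since $|\lambda| > 1$, the series $\sum_{n \geq 0} (1+n)^\ell |\lambda|^{-n-1}$ converges, and therefore
$$
\sum_{n=0}^{\infty} |\lambda|^{-n-1}\, p(T^n y) < \infty.
$$
This shows that $(S_N)_N$ is a $p$-Cauchy sequence, and as $p$ was arbitrary, $(S_N)_N$ is a Cauchy sequence in the Fr\'echet space $E$. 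By completeness, $S_N \to x$ for some $x \in E$.

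For the second step, a direct telescoping computation gives
$$
(T - \lambda I)\, S_N \;=\; \sum_{n=0}^{N} \lambda^{-n-1}\, T^{n+1}(y) \;-\; \sum_{n=0}^{N} \lambda^{-n}\, T^n(y) \;=\; -y \;+\; \lambda^{-(N+1)}\, T^{N+1}(y).
$$
The remainder term tends to zero in $E$: for any continuous seminorm $p$, the estimate $p\bigl(T^{N+1} y\bigr) \leq C(N+2)^\ell$ combined with $|\lambda|^{-(N+1)}$ forces $p\bigl(\lambda^{-(N+1)} T^{N+1} y\bigr) \to 0$. Since $T - \lambda I$ is continuous and $S_N \to x$, letting $N \to \infty$ yields $(T - \lambda I)\, x = -y$, so $(T - \lambda I)(-x) = y$. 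Hence $T - \lambda I$ is surjective.

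There is no serious obstacle here; the only point requiring a bit of attention is that the polynomial growth exponent $\ell$ in the hypothesis depends on both the vector $y$ and the seminorm $p$. This is harmless because the proof treats $y$ as fixed and checks the Cauchy property one seminorm at a time, which is exactly what convergence in a Fr\'echet space requires.
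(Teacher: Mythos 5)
Your proposal is correct and follows exactly the paper's argument: both construct the preimage via the Neumann series $-\sum_{n\geq 0}\lambda^{-n-1}T^n(y)$, using the polynomial growth hypothesis and $|\lambda|>1$ to get absolute convergence seminorm by seminorm in the Fr\'echet space. The telescoping verification you supply is the "easy to check" step the paper leaves to the reader.
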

\begin{proof}
For every $x\in E,$ we consider
$$
y:= - \sum_{n=0}^\infty\frac{1}{\lambda^{n+1}}T^n(x).
$$ The hypothesis on $T$ implies the absolute convergence of the previous series and it is easy to check that $Ty - \lambda y  = x.$
\end{proof}

The next result should be compared with \cite[Proposition 8]{abr_aaa}. However we must observe that the Fr\'echet space ${\mathcal S}({\mathbb R})$ does not satisfy the hypothesis in the mentioned result.

\begin{corollary}\label{cor:spectra_mean-ergodic}{\rm If $C_\varphi:{\mathcal S}({\mathbb R})\to {\mathcal S}({\mathbb R})$ is mean ergodic then $\sigma(C_\varphi) \subset \overline{{\mathbb D}}.$
}
\end{corollary}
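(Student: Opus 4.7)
The plan is to reduce everything to Proposition \ref{prop:surjective}. To show $\sigma(C_\varphi)\subset\overline{\mathbb{D}}$, I need to verify that for every $\lambda\in\mathbb{C}$ with $|\lambda|>1$, the operator $C_\varphi-\lambda I$ is a continuous linear bijection on $\mathcal{S}(\mathbb{R})$; since $\mathcal{S}(\mathbb{R})$ is a Fr\'echet space, the open mapping theorem then automatically provides a continuous inverse, so such $\lambda$ lies in the resolvent set.

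For injectivity, the hypothesis $|\lambda|>1$ guarantees $\lambda\notin\overline{\mathbb{D}}$, and Proposition \ref{prop:eigenvalues_disc} gives $\sigma_p(C_\varphi)\subset\overline{\mathbb{D}}$, so $\lambda$ is not an eigenvalue and $C_\varphi-\lambda I$ is injective. This step is immediate.

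The main content is surjectivity, which I will deduce from Proposition \ref{prop:surjective} applied to $T=C_\varphi$. The hypothesis of that proposition requires that for every $f\in\mathcal{S}(\mathbb{R})$ and every continuous seminorm $p$ there is $\ell\in\mathbb{N}$ with $p(C_{\varphi_n}f)=O(n^\ell)$. This is where mean ergodicity enters: as recalled in the introduction, mean ergodicity forces $\frac{1}{n}C_{\varphi_n}f\to 0$ in $\mathcal{S}(\mathbb{R})$ for each $f$. In particular, the sequence $\bigl(\tfrac{1}{n}C_{\varphi_n}f\bigr)_n$ is bounded, so for every continuous seminorm $\pi_k$ on $\mathcal{S}(\mathbb{R})$ one has $\pi_k(C_{\varphi_n}f)=O(n)$, i.e. $\ell=1$ works uniformly. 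The hypothesis of Proposition \ref{prop:surjective} is therefore satisfied, which gives surjectivity of $C_\varphi-\lambda I$ for every $|\lambda|>1$.

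Combining the two steps, $C_\varphi-\lambda I$ is a continuous linear bijection on the Fr\'echet space $\mathcal{S}(\mathbb{R})$ for every $\lambda$ with $|\lambda|>1$, hence has continuous inverse by the open mapping theorem, so $\lambda\notin\sigma(C_\varphi)$. No step looks delicate: the only point worth flagging is the appeal to the standard fact (stated in the introduction) that mean ergodicity implies $T^n x/n\to 0$, which is precisely what turns mean ergodicity into the polynomial growth bound required by Proposition \ref{prop:surjective}.
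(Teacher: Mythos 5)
Your proof is correct and follows essentially the same route as the paper: mean ergodicity gives $\tfrac{1}{n}C_{\varphi_n}f\to 0$, hence $p(C_{\varphi_n}f)=O(n)$, so Proposition \ref{prop:surjective} yields surjectivity of $C_\varphi-\lambda I$ for $|\lambda|>1$, while Proposition \ref{prop:eigenvalues_disc} gives injectivity. Your explicit mention of the open mapping theorem to upgrade bijectivity to invertibility is a detail the paper leaves implicit, but the argument is the same.
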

\begin{proof}
 Since $T = C_\varphi$ is mean ergodic then $\left(\frac{1}{n}T^n f\right)_n$ converges to zero for every $f\in {\mathcal S}({\mathbb R}).$ In particular, for every continuous seminorm $p$ on ${\mathcal S}({\mathbb R})$ and for every $f\in {\mathcal S}({\mathbb R})$ we have $p\left(T^n f\right) = O(n)$ as $n\to \infty.$ Now it suffices to apply Propositions \ref{prop:eigenvalues_disc} and \ref{prop:surjective}.
\end{proof}

\par\medskip
We recall that every power bounded operator on a Fr\'echet Montel space is mean ergodic.

\begin{corollary}\label{cor:eigen_power}{\rm If $C_\varphi:{\mathcal S}({\mathbb R})\to {\mathcal S}({\mathbb R})$ is power bounded and, for some $1 < p < \infty,$
$$
\sup_{x\in {\mathbb R}}\sum_{n=1}^\infty\frac{1}{\left(1 + \left|\varphi_n(x)\right|\right)^p} < \infty
$$ then $\sigma(C_\varphi) \subset {\mathbb D}.$
}
\end{corollary}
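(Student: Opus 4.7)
The plan is to upgrade the inclusion $\sigma(C_\varphi)\subset\overline{\D}$ (which follows from Corollary \ref{cor:spectra_mean-ergodic}, since $C_\varphi$ power bounded on the Fr\'echet--Montel space $\mathcal{S}(\R)$ is mean ergodic) by showing that every $\lambda$ with $|\lambda|=1$ lies outside the spectrum of $C_\varphi$. Since $\mathcal{S}(\R)$ is Fr\'echet, by the open mapping theorem it is enough to check that $\lambda I-C_\varphi$ is a continuous bijection for each such $\lambda$. Both injectivity and surjectivity will be deduced from a single uniform quantitative estimate combined with the Fr\'echet--Montel structure of $\mathcal{S}(\R)$.

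The central estimate I would establish is: for every $f\in\mathcal{S}(\R)$ and every $k,j\in\N$ there is a constant $C=C(f,k,j)$, depending continuously on finitely many of the seminorms $\pi_r(f)$, such that
$$
(1+|y|^2)^k\,\bigl|(C_\varphi^n f)^{(j)}(y)\bigr|\leq C\,(1+|\varphi_n(y)|)^{-p}\qquad \forall\,y\in\R,\ \forall\,n\in\N.
$$
This is obtained by applying Fa\`a di Bruno's formula to $f\circ\varphi_n$, bounding $|\varphi_n^{(l)}(y)|$ by a power of $(1+|\varphi_n(y)|)$ uniformly in $n$ via Proposition \ref{prop:powerbounded}(i), invoking Schwartz decay of $f$ with an arbitrarily large exponent $m$, and finally using Proposition \ref{prop:powerbounded}(ii) on $\{|y|\ge K\}$ to absorb the weight $(1+|y|^2)^k$ into a sufficiently large negative power of $(1+|\varphi_n(y)|)$; the region $\{|y|<K\}$ is handled trivially. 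As an immediate consequence, any $f\in\mathcal{S}(\R)$ with $C_\varphi f=\lambda f$ and $|\lambda|=1$ satisfies $|f(x)|=|f(\varphi_n(x))|\leq C(1+|\varphi_n(x)|)^{-p}$ for every $n$, hence $N\,|f(x)|\leq C\sum_{n=1}^N(1+|\varphi_n(x)|)^{-p}\leq CM$ for all $N$, and therefore $f\equiv 0$; thus $\sigma_p(C_\varphi)$ does not meet the unit circle.

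To establish surjectivity, fix $\lambda$ with $|\lambda|=1$ and $f\in\mathcal{S}(\R)$, and consider the partial sums $S_N:=\sum_{n=0}^{N}\lambda^{-(n+1)}C_\varphi^n f$. The key estimate yields $\pi_k(S_N)\leq C(f,k)\sup_y\sum_{n=0}^\infty(1+|\varphi_n(y)|)^{-p}\leq C(f,k)\cdot M$, so $(S_N)_N$ is bounded in $\mathcal{S}(\R)$, and the same bound makes the series and all its termwise derivatives converge absolutely at each fixed $y$; thus $S_N\to g$ pointwise for some $g$. Because $\mathcal{S}(\R)$ is Fr\'echet--Montel, every bounded sequence is relatively compact, and uniqueness of the pointwise limit then forces $S_N\to g$ in $\mathcal{S}(\R)$, with $g\in\mathcal{S}(\R)$. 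The same ``bounded plus pointwise convergent'' argument, applied to $(C_\varphi^n f)_n$ (which is bounded by power boundedness and tends to $0$ pointwise, since $|\varphi_n(y)|\to\infty$ for every $y$ as a direct consequence of the hypothesis), gives $C_\varphi^n f\to 0$ in $\mathcal{S}(\R)$. Passing to the limit in the telescoping identity $(\lambda I-C_\varphi)S_N=f-\lambda^{-(N+1)}C_\varphi^{N+1}f$ then gives $(\lambda I-C_\varphi)g=f$, as required.

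The main obstacle is the key estimate itself. The delicate point is that Proposition \ref{prop:powerbounded}(ii) only gives $|\varphi_n(y)|\geq|y|^{1/k}$, so the Schwartz-decay exponent $m$ has to be chosen large enough (roughly $2kK+p$) to simultaneously absorb the polynomial weight $(1+|y|^2)^k$ and leave a surplus factor of $(1+|\varphi_n(y)|)^{-p}$ to feed into the hypothesis. The conceptual point worth highlighting is that a direct Neumann series argument trying to show $\sum_n\pi_k(C_\varphi^n f/\lambda^{n+1})<\infty$ would require the uniform tail property $\sup_y\sum_{n>N}(1+|\varphi_n(y)|)^{-p}\to 0$, which the hypothesis does not provide; replacing absolute summability of seminorms by the Fr\'echet--Montel ``bounded plus pointwise convergent implies convergent'' principle is precisely what lets the argument run with only $\sup_y\sum_{n=0}^\infty(1+|\varphi_n(y)|)^{-p}<\infty$.
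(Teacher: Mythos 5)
Your proof is correct and follows essentially the same route as the paper: the closed-disc inclusion comes from mean ergodicity, injectivity of $C_\varphi-\lambda I$ on the unit circle comes from the unboundedness of all orbits forced by the summability hypothesis, and surjectivity comes from the Neumann-type series $\sum_n \lambda^{-(n+1)}C_\varphi^n f$. The paper simply asserts that this series converges in ${\mathcal S}({\mathbb R})$; your uniform estimate via Proposition \ref{prop:powerbounded} together with the Montel ``bounded plus pointwise convergent'' argument is a correct (and welcome) justification of that step.
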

\begin{proof}
 Since $C_\varphi$ is power bounded, all the symbols $\left(\varphi_n\right)$ satisfy the symbol conditions in a uniform way (see Proposition \ref{prop:powerbounded}). That is, (i) there exist $k > 0$ such that $\left|\varphi_n(x)\right|^k \geq \left|x\right|$ whenever $\left|x\right|\geq k$ and (ii) for every $\ell\in {\mathbb N}$ there are constants $m_\ell$ and $C_\ell$ such that
$$
\left|\left(\varphi_n\right)^{(\ell)}(x)\right| \leq C_\ell \left(1+\left|\varphi_n(x)\right|\right)^{m_\ell} \ \ \forall x\in {\mathbb R}.
$$ From Corollary \ref{cor:spectra_mean-ergodic} we have $\sigma(C_\varphi) \subset \overline{{\mathbb D}}.$ Hence we only need to check that the operator $C_\varphi - \lambda I$ is an isomorphism whenever $|\lambda| = 1.$ In fact, the hypothesis imply that the series
$$
f:= -\sum_{n=0}^\infty\frac{1}{\lambda^{n+1}}g\left(\varphi_n(\cdot)\right)
$$ is convergent for every $g\in {\mathcal S}({\mathbb R})$ and $C_\varphi(f) - \lambda f = g.$ On the other hand, since all the orbits of $\varphi$ are unbounded we can apply Proposition \ref{prop:eigenvalues_disc} to get that $C_\varphi - \lambda I$ is also injective.
\end{proof}

The hypothesis of Corollary \ref{cor:eigen_power} are satisfied in the case that $\varphi(x) = \sqrt{x^2 + 1}.$ Combining Example \ref{ex:eigenvalues_sqrt} and Corollary \ref{cor:eigen_power} we get the following

\begin{example}{\rm Let $\varphi(x) = \sqrt{x^2 + 1}.$ Then
$$
\sigma(C_\varphi) = \sigma_p(C_\varphi) = {\mathbb D}.
$$
 }
\end{example}

The following example uses the Zak transform, a tool coming from time-frequency analysis that has shown to be very useful to characterize the spectrum of frame operators in some special cases. The Zak transform of a function $f\in L^1({\mathbb R})$ is defined by
$$
{\mathcal Z}f(x,\omega) = \sum_{k\in {\mathbb Z}}f(x-k)e^{2\pi i k\omega}.
$$ It turns out that ${\mathcal Z}f\in L^1([0,1]^2)$ and
\begin{equation}\label{eq:zak}
\int_0^1{\mathcal Z}f(x,\omega)e^{-2\pi i x\omega} dx = \widehat{f}(\omega)\ a.e\ \omega\in {\mathbb R}.
\end{equation}
If $f\in {\mathcal S}({\mathbb R})$ then ${\mathcal Z}f$ is a continuous funtion on ${\mathbb R}^2$ and the identity (\ref{eq:zak}) hods for every $\omega\in {\mathbb R}.$  See \cite[Section 8.1]{grochenig} for details.

\begin{example}\label{ex:translations}{\rm Let $\varphi(x) = x+1.$ Then $\sigma_p(C_\varphi) = \emptyset$ while
$$
\sigma(C_\varphi) = \left\{\lambda\in {\mathbb C}:\ \ |\lambda| = 1\right\}.
$$
}
\end{example}
\begin{proof}
 According to Proposition \ref{prop:spectra_increasing} we have $\sigma_p(C_\varphi) = \emptyset.$ Hence, we have to show that $C_\varphi - \lambda I$ is surjective if and only if $|\lambda|\neq 1.$
 \par\medskip
 (a) We first discuss the case $|\lambda|>1.$ For every $\ell\in {\mathbb N}$ we consider the continuous seminorm
 $$
 p_\ell(h):=\sup_{x\in {\mathbb R}}\sup_{0\leq k\leq \ell}(1+x^2)^\ell\left|h^{(k)}(x)\right|,\ \ h\in {\mathcal S}({\mathbb R}).
 $$ We will check that the hypothesis in Proposition \ref{prop:surjective} are satisfied. To this end, given $x\in {\mathbb R}$ and $n\in {\mathbb N}$ we consider the following cases:
 \par
 (i) $|x+n| \geq \frac{|x|}{2}.$ Then $(1+x^2)^\ell \leq 4^\ell\left(1 + (x+n)^2\right)^\ell$ and
 $$
 \left(1+x^2\right)^\ell\left|f^{(k)}(x+n)\right| \leq 4^\ell p_\ell(f) \ \ \forall\ 0\leq k\leq \ell.
 $$ \par
 (ii) $|x+n| < \frac{|x|}{2}.$ Then $x < 0$ and $|x|-n < \frac{|x|}{2},$ hence $|x| < 2n$ and
 $$
 \left(1+x^2\right)^\ell\left|f^{(k)}(x+n)\right| \leq \left(1+4n^2\right)^\ell p_\ell(f) \ \ \forall\ 0\leq k\leq \ell.
 $$ Consequently
  $$
 p_\ell\left(C_{\varphi_n} f\right) \leq \left(1+4n^2\right)^\ell p_\ell(f) \ \ \forall n\in {\mathbb N}.
 $$ We can apply Proposition \ref{prop:surjective} to conclude that $C_\varphi - \lambda I$ is surjective whenever $|\lambda| > 1.$
 \par\medskip
 (b) We now consider the case $|\lambda| < 1.$ We observe that $C_\varphi^{-1} = C_\psi$ where $\psi(x) = x-1.$ The same argument as in (a) shows that
 $$
 0 < |\lambda| < 1 \Rightarrow \frac{1}{\lambda}\notin \sigma(C_\psi) \Rightarrow \lambda \notin \sigma(C_\varphi).
 $$
 \par\medskip
 (c) We assume that $C_\varphi - \lambda I$ is surjective for some $\lambda = e^{2\pi i \omega},$ $\omega\in {\mathbb R}.$ We now fix $g\in {\mathcal S}({\mathbb R})$ and take $f\in {\mathcal S}({\mathbb R})$ such that $C_\varphi f = \lambda f + g.$ Then
 $$
 C_{\varphi_n} f = \lambda^n f + \sum_{k=0}^{n-1}\lambda^{n-1-k}C_\varphi^k g\ \ \forall\ n\in {\mathbb N}.
 $$ That is,
 $$
 f(x+n) = \lambda^n f(x) + \sum_{k=0}^{n-1}\lambda^{n-1-k}g(x+k)
 $$ or, equivalently,
 $$
 f(x) = \lambda^n f(x-n) + \sum_{k=1}^{n}\lambda^{k-1}g(x-k).
 $$ We take limits as $n\to \infty$ (with $x$ fixed) in the two previous identities and obtain
 $$
 f(x) = -\frac{1}{\lambda}\sum_{k=0}^\infty \frac{1}{\lambda^k}g(x+k)
 $$ and also
 $$
 f(x) = \frac{1}{\lambda}\sum_{k=1}^\infty\lambda^k g(x-k).
 $$ Consequently
 $$
 \sum_{k\in {\mathbb Z}}\lambda^k g(x-k) = 0\ \ \ \forall\ x\in {\mathbb R}.
 $$ This means that the Zak transform satisfies
 $$
 {\mathcal Z}g(x,\omega) = 0 \ \ \mbox{for every}\ x\in {\mathbb R}\ \mbox{and}\ g\in {\mathcal S}({\mathbb R}).
 $$ Since
 $$
 \int_0^1{\mathcal Z}g(x,\omega)e^{-2\pi i x\omega}\ dx = \widehat{g}(\omega)
 $$ we get a contradiction.
\end{proof}

\begin{example}{\rm Let $\varphi(x) = ax$ where $a\neq 0$ and $|a|\neq 1.$ Then $$\sigma(C_\varphi) = {\mathbb C}\setminus \left\{0\right\}.$$
 }
\end{example}
\begin{proof}
 Since $(C_\varphi)^{-1} = C_\psi$ for $\psi(x) = a^{-1}x$ then it suffices to consider the case $|a| > 1.$ We will show that $C_\varphi - \lambda I$ is not surjective for every $\lambda\neq 0.$ We distinguish two cases.
 \par\medskip
 (a) We first consider $|\lambda| \geq 1$ and assume that $f(ax) = \lambda f(x) + g(x),$ where $f,g\in {\mathcal S}({\mathbb R}).$ Then, an iterative argument gives
 $$
 f(a^n x) = \lambda^n \left(f(x) + \frac{1}{\lambda}\sum_{k=0}^{n-1}\frac{1}{\lambda^k}g(a^k x)\right).
 $$ For $x\neq 0$ we can take limits as $n\to \infty$ and obtain
 $$
 f(x) = - \frac{1}{\lambda}\sum_{k=0}^{\infty}\frac{1}{\lambda^k}g(a^k x).
 $$ From
 $$
 \left|g(a^k x)\right| \leq \frac{C}{1 + x^2 a^{2k}}
 $$ we deduce that the previous series converges absolutely and uniformly on $\left\{x\in {\mathbb R}:\ |x|\geq \varepsilon\right\}$ for every $\varepsilon > 0.$ A similar argument permits to conclude that the formal derivatives of the series converge absolutely and uniformly on $\left\{x\in {\mathbb R}:\ |x|\geq \varepsilon\right\}.$ Consequently, for every $x\neq 0$ and $j\in {\mathbb N},$
 $$
 f^{(j)}(x) = -\frac{1}{\lambda}\sum_{k=0}^{\infty}\left(\frac{a^j}{\lambda}\right)^k g^{(j)}(a^k x).
 $$ We now fix $j\in {\mathbb N}$ such that
 $$
 \left|\frac{a^j}{\lambda}\right| > 1.
 $$ We take a test function $\Phi$ such that $\Phi = 1$ on $[-1,1]$ and $\Phi(x) = 0$ for $|x|\geq |a|.$ Then
 $$
 g(x) = \frac{x^j}{j!}\Phi(x)
 $$ is a function in the Schwartz class such that $g^{(j)} = 1$ on $[-1,1]$ and $g^{(j)} = 0$ for $|x|\geq |a|.$ If $f\in {\mathcal S}({\mathbb R})$ satisfies $C_\varphi f = \lambda f + g$ then, for $x_m = a^{-m}$ we have
 $$
 f^{(j)}(x_m) = -\frac{1}{\lambda}\sum_{k=0}^m\left(\frac{a^j}{\lambda}\right)^k.
 $$ Hence $f^{(j)}$ is unbounded in any neighborhood of the origin, which is a contradiction.
 \par\medskip
 (b) Let $\lambda\neq 0,\ |\lambda| < 1$ and assume that $f(ax) = \lambda f(x) + g(x).$ Then, an iterative argument gives
 $$
 f(x) = \lambda^n f\left(\frac{x}{a^n}\right) + \frac{1}{\lambda}\sum_{k=1}^n\lambda^k g\left(\frac{x}{a^k}\right).
 $$ After taking limits as $n\to \infty$ we get
 $$
 f(x) = \frac{1}{\lambda}\sum_{k=1}^\infty\lambda^k g\left(\frac{x}{a^k}\right) \ \ \forall x\in {\mathbb R}.
 $$ We now take $g\in {\mathcal S}({\mathbb R})$ such that $g(x) = 1$ for $x\in [-1,1]$ and $g(x) = 0$ for $|x| \geq |a|.$ If $f\in {\mathcal S}({\mathbb R})$ satisfies $C_\varphi f = \lambda f + g$ then, for every $m\in {\mathbb N},$
 $$
 f(a^m) = \frac{1}{\lambda}\sum_{k=m}^\infty\lambda^k = \frac{\lambda^{m-1}}{1-\lambda}.
 $$ Take $j\in {\mathbb N}$ such that $\left|\lambda a^j\right| > 1.$ Then
 $$
 \left|a^m\right|^j\cdot \left|f(a^m)\right| \geq \frac{\left|\lambda a^j\right|^m}{\left|\lambda(1-\lambda)\right|},
 $$ which is unbounded as $m\to \infty.$ This is a contradiction with $f\in {\mathcal S}({\mathbb R}).$
\end{proof}
\par\medskip
In the case $\varphi(x) = -x$ we have $\sigma(C_\varphi)= \sigma_p\left(C_\varphi\right) = \left\{-1,1\right\}.$

\end{document}